\documentclass[12pt]{amsart}
\usepackage{amsmath,amsfonts,textcomp,longtable,pstricks}
\usepackage[dvips]{graphicx}

\textwidth =6.9in \textheight = 9.4in \topmargin= -0.2in
\oddsidemargin = -0.3in \evensidemargin = -0.3in
\setlength{\parskip}{1pt plus 1pt minus 1pt}


\newtheorem{thm}{Theorem }[section]

\newtheorem{lemma}[thm]{Lemma }

\newtheorem{prop}[thm]{Proposition }

\newtheorem{corollary}[thm]{Corollary }

\theoremstyle{definition}

\numberwithin{equation}{section}

\def\RR{{\mathbb R}}

\def\QQ{{\mathbb Q}}
\def\ZZ{{\mathbb Z}}

\def\FF{{\mathbb F}}
\def\fq{{\mathbb{F}_q}}
\def\kk{{\bar{k}}}

\def \bra#1\ket {\mathop{\vphantom{#1}\left<\smash{#1}\right>}\nolimits}
\def\sep{{\mathrm sep}}

\DeclareMathOperator{\coker}{coker}

\DeclareMathOperator{\Gal}{Gal} 
 \DeclareMathOperator{\rk}{rk}
 \DeclareMathOperator{\diag}{diag}

 \DeclareMathOperator{\Np}{Np}
\DeclareMathOperator{\Hp}{Hp_\ell}

\def\OO{\mathcal{O}}

\def\plim{\mathop{{\lim\limits_{\longleftarrow}}}\nolimits}

\renewcommand \phi {\varphi}

\begin{document}
\author{Sergey Rybakov}
\thanks{Supported in part by the research project DST-1211005, RFBR grants no. 12-01-31280, 12-01-92697-IND-a, 14-01-93108 and by a subsidy granted to the HSE by the Government of the Russian Federation for the implementation of the Global Competitiveness Program}
\address{Institute for information transmission problems of the Russian Academy of Sciences}
\address{Poncelet laboratory (UMI 2615 of CNRS and Independent University of
Moscow)}
\address{ AG Laboratory, HSE, 7 Vavilova str., Moscow, Russia, 117312 }

\email{rybakov@mccme.ru, rybakov.sergey@gmail.com}%
\dedicatory{To M. A. Tsfasman and S. G. Vl\v{a}du\c{t} on the occasion of their 60th birthdays}%
\title[On groups of points on abelian varieties]
{On classification of groups of points\\ on abelian varieties over finite fields}
\date{}
\keywords{abelian variety, the group of rational points, finite field, Newton polygon, Hodge polygon}

\subjclass{14K99, 14G05, 14G15}

\begin{abstract}
In this paper we improve our previous results on classification of groups of points on abelian varieties over finite fields.
The classification is given in terms of the Weil polynomial of abelian varieties in a given $k$-isogeny class over a finite field $k$.
\end{abstract}

\maketitle
\section{Introduction}
Let $X$ be an algebraic variety over a finite field $k=\fq$  of
characteristic $p$. The set $X(k)$ of points defined over $k$ is an important 
invariant of $X$. If $X$ is an abelian variety, then $X(k)$ is a finite abelian group. 
Previously~\cite{Ry13} we classified groups of points on abelian surfaces over finite fields.
It is important that for the classification in question one first divide the set of abelian surfaces into 
isogeny classes, and then classify groups of points inside a given isogeny class. In this paper we follow 
the same strategy. By the Tate-Honda theorem, an isogeny class of abelian varieties over 
$k$ corresponds to characteristic polynomial of Frobenius action on the $\ell$-th Tate module of 
any variety from the class (the Weil polynomial). These polynomials are known for abelian varieties of low 
dimensions. In this paper we state some partial results concerning higher dimensions. In particular, we prove a conjecture stated in~\cite{Ry1}. We also clarify the proof of the classification theorem for groups of points on abelian surfaces. 

\section{Preliminary results on abelian varieties and notation}
Let $\kk$ be an algebraic closure of $k$. For a given 
prime number $\ell$ and a natural number $m$ denote by $A_m$ the
kernel of multiplication by $\ell^m$ in $A(\kk)$. The $\ell$-th Tate module of $A$ is defined as $T_\ell(A) =
\plim A_m$. The Frobenius endomorphism $F$ of $A$ acts on the Tate module by a
semisimple linear transformation, which we also denote by $F$.
Suppose $\ell\ne p$. Then $T_\ell(A)$ is a free $\ZZ_\ell$-module of rank $2\dim A$.
We define {\it the Weil polynomial of $A$} as the characteristic polynomial
$$
f_A(t) = \det(t-F|T_\ell(A)).
$$
It is a monic polynomial of degree $2\dim A$ with rational integer coefficients independent of
the choice of prime $\ell$. 
Tate proved that abelian varieties $A$ and $B$ are isogenous if and only if $f_A(t)=f_B(t)$~\cite{Ta66}. 

In what follows we would like to treat the case $\ell=p$ as well. The module $T_p(A)$ sometimes is called the physical Tate module. 
The polynomial $g(t)=\det(t-F|T_p(A))$ divides $f_A(t)$ in $\ZZ_p[t]$. Moreover, $\deg g\leq \dim A$, and $$f_A(t)\equiv g(t)t^{2\dim A-\deg g}\mod p.$$ 
In fact, main results of this paper are trivial for $T_p$, because its rank is too small for abelian varieties of low dimension. However we think it is more natural to state the results for all primes. 

Recall some notation and results from~\cite[Section 2]{Ry} .
Let $\Delta$ be the set of roots of $f\bmod\ell$, and let $\Lambda\subset\Delta$ be the image of a section of the natural map $\Delta\to\Delta/\Gal(\overline{\FF}_\ell/\FF_\ell)$. 
Then $f=\prod_{\lambda\in\Lambda}f_\lambda$, where $f_\lambda\in\ZZ_\ell[t]$ is the monic polynomial such that the set of roots of $f_\lambda\bmod\ell$ is $\Gal(\overline{\FF}_\ell/\FF_\ell)\lambda$.
We have a corresponding decomposition of the Tate module
$$T_\ell(A)\cong\oplus_{\lambda\in\Lambda} T_\lambda(A),$$ where $F$ acts on $T_\lambda(A)$ with the characteristic polynomial $f_\lambda$.
In this paper we are interested in the summand corresponding to $\lambda=1$ which we denote by $T_\ell^{(1)}(A)$. We also write $$T_\ell^{(2)}(A)=\oplus_{\lambda\neq 1} T_\lambda(A).$$
We denote by $f_\ell\in \ZZ_\ell[t]$ the polynomial $f_1$ corresponding to $\lambda=1$. This is the divisor of $f_A$ of maximal degree such that $f_\ell(t-1)\equiv t^{\deg f_\ell}\bmod\ell$.
Note that this results hold for $\ell=p$ without any changes.

Let $f_\ell^{\sep}\in \ZZ_\ell[t]$ be the monic separable polynomial with the same set of roots as $f_\ell$, and let $$R=\ZZ_\ell[t]/f_\ell^{\sep}\ZZ_\ell[t].$$ Denote by $x\in R$ the
image of $t$ under the natural projection from $\ZZ_\ell[t]$. Thus $T_\ell^{(1)}(A)$ is an $R$-module such that $x$ acts as $F$. 
Without danger of confusion we define {\it a Tate module} as an $R$-module which is free of finite rank over $\ZZ_\ell$. The {\it rank} of a Tate module is its rank over $\ZZ_\ell$. 

For an abelian group $G$ we denote by $G_\ell$ the $\ell$-primary
component of $G$. The group $A(k)$ is the kernel of $1-F: A\to A$, and the $\ell$-component
$$A(k)_\ell\cong T_\ell(A)/(1-F)T_\ell(A)$$ (see~\cite[Proposition 3.1]{Ry1}). 
By definition, $T_\ell(A)/(1-F)T_\ell(A)\cong T_\ell^{(1)}(A)/(1-x)T_\ell^{(1)}(A)$. Informally, $T_\ell^{(1)}(A)$ is the minimal part of $T_\ell(A)$ which knows about the group of points. The proof of
the following proposition is essentially the proof of~\cite[Theorem~1.1]{Ry1}. 

\begin{prop}
\label{prop_on_Tate_module} 
Let $f$ be a Weil polynomial, and let $G$ be a finite abelian group of order $f(1)$.  
Suppose that for any prime number $\ell$ dividing $f(1)$ there exists a Tate module $T_\ell$ such that $f_\ell(t)=\det(t-x|T_\ell)$, and $G_\ell\cong T_\ell/(1-x)T_\ell$. 
Then there exists an abelian variety $A$ over $k$ such that $A(k)\cong G$. 
\end{prop}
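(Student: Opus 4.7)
My plan is to start from any abelian variety $A_0$ in the isogeny class determined by $f$ (which exists by the Tate--Honda theorem, since $f$ is a Weil polynomial) and to modify it by isogenies, one prime at a time, so that for every $\ell\mid f(1)$ the local Tate module $T_\ell^{(1)}$ of the resulting variety is isomorphic to the prescribed $T_\ell$. Every abelian variety in the isogeny class of $A_0$ has $|A(k)|=f(1)=|G|$, so once the $\ell$-parts of $A(k)$ have been matched with $G_\ell$ for every $\ell\mid f(1)$, the remaining primes contribute nothing on either side (for $\ell\nmid f(1)$ the polynomial $f_\ell$ is trivial, so $T_\ell^{(1)}(A)=0$ and $A(k)_\ell=0=G_\ell$), and we obtain $A(k)\cong G$.

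Fix $\ell\mid f(1)$. Within the isogeny class of $A_0$ the rational Tate module $V_\ell=T_\ell(A_0)\otimes\QQ_\ell$ is fixed as an $R\otimes\QQ_\ell$-module, and the abelian varieties over $k$ isogenous to $A_0$ by an isogeny of $\ell$-power degree correspond to the Frobenius-stable $\ZZ_\ell$-lattices in $V_\ell$. Because $R\otimes\QQ_\ell$ is semisimple and $f_\ell$ is the characteristic polynomial of $x$ on both $T_\ell$ and $T_\ell^{(1)}(A_0)$, we have $T_\ell\otimes\QQ_\ell\cong V_\ell^{(1)}$, so $T_\ell$ embeds as a Frobenius-stable lattice in $V_\ell^{(1)}$. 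Combined with $T_\ell^{(2)}(A_0)\subset V_\ell^{(2)}$ this gives a Frobenius-stable lattice in $V_\ell$, which by the lattice--isogeny correspondence (essentially the content of~\cite[Theorem~1.1]{Ry1}) is realized as the Tate module of an abelian variety $A^{(\ell)}$ that is $k$-isogenous to $A_0$ by an isogeny of $\ell$-power degree. By construction $T_\ell^{(1)}(A^{(\ell)})\cong T_\ell$.

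An isogeny of $\ell$-power degree leaves the $\ell'$-Tate module unchanged for every $\ell'\ne\ell$, so the modifications of the previous paragraph can be performed independently at each of the finitely many primes dividing $f(1)$ and then composed. This produces a single abelian variety $A$, $k$-isogenous to $A_0$, such that $T_\ell^{(1)}(A)\cong T_\ell$ simultaneously for every $\ell\mid f(1)$. Applying the identification $A(k)_\ell\cong T_\ell^{(1)}(A)/(1-x)T_\ell^{(1)}(A)$ recalled in the excerpt then gives $A(k)_\ell\cong G_\ell$ for each $\ell\mid f(1)$, hence $A(k)\cong G$. The only real obstacle is the lattice-realization step at each prime: for $\ell\ne p$ it reduces to the classical bijection between finite $k$-rational subgroup schemes of $A_0[\ell^\infty]$ and Frobenius-stable finite-index sublattices of $T_\ell(A_0)$, whereas for $\ell=p$ one must argue via the physical Tate module as in~\cite{Ry1}, which in the cases of interest is essentially automatic because $\deg f_p$ is so small, as remarked after the definition of $T_p$.
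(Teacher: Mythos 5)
Your proof is correct and follows essentially the same route as the paper's: pick any $B$ in the isogeny class, use semisimplicity of the Frobenius action on $V_\ell^{(1)}(B)$ to realize $T_\ell$ as a Frobenius-stable lattice there, glue with $T_\ell^{(2)}(B)$, invoke the lattice--isogeny correspondence from~\cite{Ry1} to produce a variety with the prescribed Tate module, and iterate over the primes dividing $f(1)$ using that an $\ell$-power isogeny leaves $T_{\ell'}$ unchanged for $\ell'\neq\ell$. The only cosmetic difference is that the paper treats $\ell=p$ uniformly via~\cite[Lemma~2.1]{Ry1} rather than appealing to $\deg f_p$ being small (which is a remark about the applications, not needed for the proposition as stated), but your argument correctly defers to~\cite{Ry1} for this case anyway.
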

\begin{proof}
Let $B$ be an abelian variety over $k$ with Weil polynomial $f$. By the discussion before the proposition, $T_\ell(B)\cong T_\ell^{(1)}(B)\oplus T_\ell^{(2)}(B)$. Since the Frobenius action on the vector space $V_\ell^{(1)}(B)=T_\ell^{(1)}(B)\otimes\QQ_\ell$ is semisimple, it is determined up to isomorphism by the polynomial $f_\ell$. Therefore there exists an isomorphism $V_\ell^{(1)}(B)\cong T_\ell\otimes_{\ZZ_\ell}\QQ_\ell$, and an inclusion $T_\ell\to V_\ell^{(1)}(B)$ such that the image of $T_\ell$ is contained in $T_\ell^{(1)}(B)$. Put $T'_\ell=T_\ell\oplus T_\ell^{(2)}(B)$. Clearly, we have an isomorphism of $F$-vector spaces $T'_\ell\otimes\QQ_\ell\to V_\ell(B)$. 
By~\cite[Lemma 2.1]{Ry1}, there exists an abelian variety $B_1$ and an $\ell$-isogeny $B_1\to B$ such that
$T_\ell(B_1)\cong T'_\ell$. By~\cite[Proposition 3.1]{Ry1}, we have
$G_\ell\cong B_1(k)_\ell$.

Let $\ell_1,\dots,\ell_s$ be the set of prime divisors of
$f(1)$. By the above there is a sequence of isogenies
$$A=B_s\xrightarrow{\phi_s} B_{s-1}\to\dots\xrightarrow{\phi_2}
B_1\xrightarrow{\phi_1} B$$ such that $\phi_i:B_i\to B_{i-1}$ is
an $\ell_i$-isogeny and $$B_i(k)_{\ell_i}\cong G_{\ell_i}.$$ Since $\phi_i$ is an
$\ell_i$-isogeny, $T_\ell(B_i)\cong T_\ell(B_{i-1})$ for any
$\ell\neq\ell_i$. Thus $A(k)\cong G$.
\end{proof}

Fix and a positive integer $r$. A {\it matrix factorization} (with respect to $f_\ell, f_\ell^{\sep}$, and $r$) is a pair $(X,Y)$ of $r\times r$ matrices with coefficients in $\ZZ_\ell[t]$ such that $\det X=f_\ell$ and $YX=f_\ell^{\sep}\cdot I_r$, where $I_r$ is the identity matrix. The following proposition shows the connection between Tate modules and matrix factorizations. 

\begin{prop}\cite{Ry13}\label{prop_mf}
Let $(X,Y)$ be a matrix factorization. Then \begin{equation}\label{eq_mf}
T=\coker(\ZZ_\ell[t]^r \xrightarrow{X} \ZZ_\ell[t]^r).
\end{equation}
is a Tate module, and the characteristic polynomial of the action of $x$ on $T$ is equal to $f_\ell$.

Conversely, let $T$ be a Tate module, which can be generated over $R$ by $r$ elements. Suppose that $\det(t-x|T)=f_\ell$, and $\Hp(T/(1-x)T,r)=(m_1,\dots,m_r)$. Then there exists a matrix
factorization $(X,Y)$ such that $T$ has the presentation
${\rm(\ref{eq_mf})}$, and $$X\equiv \diag(\ell^{m_1},\dots,\ell^{m_r})\mod (1-t)\ZZ_\ell[t].$$
\end{prop}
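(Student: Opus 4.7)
The plan is to handle the two directions of the proposition separately; the direct direction is essentially formal, and the converse requires a careful construction.

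For the direct direction, given a matrix factorization $(X,Y)$, I would first observe that $f_\ell^{\sep}$ annihilates $T$: for any $v\in\ZZ_\ell[t]^r$, $f_\ell^{\sep}v=YXv\in X\cdot\ZZ_\ell[t]^r$. Thus $T$ is an $R$-module, and finitely generated over $\ZZ_\ell$ because $R$ is. To show $T$ is $\ZZ_\ell$-free, I would apply the snake lemma to the multiplication-by-$\ell$ diagram on $0\to\ZZ_\ell[t]^r\xrightarrow{X}\ZZ_\ell[t]^r\to T\to 0$; this identifies the $\ell$-torsion $T[\ell]$ with the kernel of $X\bmod\ell$ on $\FF_\ell[t]^r$, which vanishes because $\det X=f_\ell$ is monic (hence nonzero after reduction mod $\ell$) and $\FF_\ell[t]$ is a domain. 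To identify the characteristic polynomial of $x$ on $T$, I would tensor the presentation with $\QQ_\ell$ and apply Smith normal form over the PID $\QQ_\ell[t]$: writing $X=U\diag(d_1,\ldots,d_r)V$ with $U,V\in\mathrm{GL}_r(\QQ_\ell[t])$, we have $T\otimes\QQ_\ell\cong\bigoplus_i\QQ_\ell[t]/(d_i)$, on which the characteristic polynomial of $t$ is $\prod d_i=\det X=f_\ell$; freeness of $T$ over $\ZZ_\ell$ transfers the identity to $T$ itself.

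For the converse, the plan is to realize $T$ as the cokernel of a matrix of relations on a carefully chosen presentation. First, pick generators $e_1,\ldots,e_r$ of $T$ over $R$ whose images in $T/(1-x)T$ realize the direct-sum decomposition $\bigoplus_i\ZZ/\ell^{m_i}\ZZ$ prescribed by $\Hp(T/(1-x)T,r)=(m_1,\ldots,m_r)$. Let $\pi:\ZZ_\ell[t]^r\to T$ be the induced surjection and $K=\ker\pi$. The crucial intermediate claim is that $K$ is free of rank $r$ over $\ZZ_\ell[t]$: since $T$ is $\ZZ_\ell$-free with $t$ acting by some matrix $A$, it admits the free resolution $0\to\ZZ_\ell[t]^n\xrightarrow{tI-A}\ZZ_\ell[t]^n\to T\to 0$ (where $n=\rk_{\ZZ_\ell}T$), so $T$ has projective dimension at most one over $\ZZ_\ell[t]$; thus $K$ is projective, and by the Quillen--Suslin theorem applied with PID base $\ZZ_\ell$, it is free of rank $r$ (the rank computation coming from tensoring with $\QQ_\ell$). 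Taking the columns of a basis of $K$ as the columns of a matrix $X$, and rescaling one column by a unit in $\ZZ_\ell$ if needed, we obtain $\det X=f_\ell$. The matrix $Y$ then exists because $f_\ell^{\sep}\cdot\ZZ_\ell[t]^r\subset K=X\cdot\ZZ_\ell[t]^r$, so each $f_\ell^{\sep}b_i$ is a $\ZZ_\ell[t]$-linear combination of the columns of $X$, and collecting the coefficients gives $YX=f_\ell^{\sep}I_r$.

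The principal difficulty is arranging that the basis of $K$ reduces modulo $(1-t)$ to $\diag(\ell^{m_1},\ldots,\ell^{m_r})$ on the nose. Reducing the presentation mod $(1-t)$ yields a presentation of $T/(1-x)T$ by $X\bmod(1-t)\in\Mat_r(\ZZ_\ell)$ whose cokernel is $\bigoplus\ZZ/\ell^{m_i}\ZZ$ by the choice of generators $e_i$. The prescribed diagonal form is then obtained by replacing the basis of $K$ with a $\mathrm{GL}_r(\ZZ_\ell[t])$-translate whose reduction mod $(1-t)$ is a change-of-basis matrix in $\mathrm{GL}_r(\ZZ_\ell)$ putting $X\bmod(1-t)$ into Smith normal form; this is possible because $\mathrm{GL}_r(\ZZ_\ell[t])\to\mathrm{GL}_r(\ZZ_\ell)$ induced by $t\mapsto 1$ is surjective, as it admits the section from the constant embedding. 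It is precisely here that the Hodge polygon hypothesis rigidifies the construction, forcing the reduction to be $\diag(\ell^{m_1},\ldots,\ell^{m_r})$ rather than some other representative of the same isomorphism class.
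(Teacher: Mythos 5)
The paper does not prove Proposition~\ref{prop_mf}: it is imported verbatim from~\cite{Ry13} with a citation and no argument, so there is no in-paper proof to compare your write-up against. I will therefore evaluate your proof on its own.

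Your direct direction contains a small but genuine slip. To show $T=\coker X$ is an $R$-module you need $f_\ell^{\sep}\,v$ to lie in the \emph{image of $X$}, but you invoke $f_\ell^{\sep}v=YXv$, which exhibits $f_\ell^{\sep}v$ as an element of the image of $Y$, not $X$. The correct step is $f_\ell^{\sep}v=XYv$, and this requires the (true but unstated) observation that over the integral domain $\ZZ_\ell[t]$ the relation $YX=f_\ell^{\sep}I_r$ with $\det X\neq 0$ automatically forces $XY=f_\ell^{\sep}I_r$ (pass to the fraction field, where $Y=f_\ell^{\sep}X^{-1}$). The remainder of the direct direction (snake lemma to kill $\ell$-torsion, Smith normal form over $\QQ_\ell[t]$ for the characteristic polynomial) is sound.

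For the converse, your structural route — realize $T$ by a surjection $\ZZ_\ell[t]^r\twoheadrightarrow T$, use the two-term free resolution $0\to\ZZ_\ell[t]^n\xrightarrow{tI-A}\ZZ_\ell[t]^n\to T\to 0$ to get projective dimension one, conclude by Quillen--Suslin (or Seshadri over a PID) that the kernel is free of rank $r$, then read off $X$ and $Y$ — is correct, if heavier machinery than strictly necessary. One small omission: lifting the prescribed generators of $T/(1-x)T$ to generators of $T$ over $R$ is not automatic and needs Nakayama; it works because $R$ is a finite local $\ZZ_\ell$-algebra with $1-x$ in its maximal ideal. The genuine gap is in your final rigidification step. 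Right- and left-translating by constant matrices so that $X\bmod(1-t)$ becomes $\diag(\ell^{m_1},\dots,\ell^{m_r})$ multiplies $\det X$ by the unit $\det U\cdot\det V\in\ZZ_\ell^\times$. Since $\det X(1)=f_\ell(1)$ must equal $\det\bigl(\diag(\ell^{m_i})\bigr)=\ell^{\sum m_i}$, both normalizations $\det X=f_\ell$ and $X(1)=\diag(\ell^{m_i})$ can hold simultaneously only when $f_\ell(1)$ is \emph{exactly} the power $\ell^{\sum m_i}$, which in general it is only up to a unit (already visible for $r=1$, where $X=(f_\ell)$ is forced and $X(1)=f_\ell(1)$). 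Your write-up does not notice the conflict; either the congruence must be read up to a unit scaling of one diagonal entry, or $\det X=f_\ell$ must be read up to a unit, and you should say which and adjust the last change of basis accordingly.
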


\section{Tate modules and Hodge polygons}
Denote by $\nu_\ell$ the $\ell$-adic valuation on $\QQ_\ell$.
Let $Q(t)=\sum_i Q_i t^i$ be a polynomial of degree $d$ over
$\QQ_\ell$, and let $Q(0)\neq 0$. Take the lower convex hull
of the points $(i,\nu_\ell(Q_i))$ for $0\leq i\leq d$ in $\RR^2$.
The boundary of this region without vertical lines is called {\it
the Newton polygon $\Np_\ell(Q)$ of $Q$}. 

Let $G=\oplus_{i=1}^r\ZZ/\ell^{m_i}\ZZ$ be an $\ell$--group of order $\ell^m$. In what follows we assume that the exponents $m_1,\dots,m_r$ are ordered in the following way: $0\leq m_1\leq m_2\leq\dots\leq m_r$.
Note that some of these numbers could be zero.
The {\it Hodge polygon $\Hp(G,r)$ of the group $G$} is the convex polygon with
vertices $(i,\sum_{j=1}^{r-i}m_j)$ for $0\leq i\leq r$. It has
$(0,m)$ and $(r,0)$ as its endpoints, and its slopes are
$-m_r,\dots, -m_1$. We write $\Hp(G,r)=(m_1,\dots,m_r).$ If $T$ is a Tate module, we denote $\Hp(T/(1-x)T,\rk T)$ by $\Hp(T)$.

\begin{lemma}\label{key_lemma}
Let $$0\to H\to G\to G'\to 0$$ be a short exact sequence of finite abelian $\ell$--groups. 
Let $\Hp(H,s)=(n_1,\dots,n_s)$, and $\Hp(G,r)=(m_1,\dots,m_r)$.
Assume that $G'$ is generated by $r-s$ elements. Then $m_i\leq n_i$ for $1\leq i\leq s$.

Dually, if $\Hp(G',r-s)=(n'_1,\dots,n'_{r-s})$, then $m_i\geq n'_i$ for $s+1\leq i\leq r$.
\end{lemma}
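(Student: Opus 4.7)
The plan is to translate both assertions into inequalities for the numerical invariants $d_k(M):=\dim_{\FF_\ell}(\ell^{k-1}M/\ell^k M)$, which for a finite abelian $\ell$-group $M$ with Hodge polygon $(a_1,\dots,a_t)$ equals $\#\{j : a_j\geq k\}$. Because the Hodge invariants are listed non-decreasingly, $m_i\geq k$ is equivalent to $d_k(G)\geq r-i+1$, and similarly for $n_i$ and $n'_i$. Under this dictionary the first assertion $m_i\leq n_i$ for $i\leq s$ is equivalent to
$$d_k(G)\leq d_k(H)+(r-s)\quad\text{for all } k\geq 1,$$
while the second (dual) assertion reduces to $d_k(G)\geq d_k(G')$ for all $k\geq 1$.

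The dual inequality is immediate: the surjection $G\to G'$ restricts to a surjection $\ell^{k-1}G\to \ell^{k-1}G'$ and descends to a surjection of $\FF_\ell$-vector spaces $\ell^{k-1}G/\ell^k G\to\ell^{k-1}G'/\ell^k G'$, so $d_k(G)\geq d_k(G')$.

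For the main inequality, I would use a lifting argument. Let $t=d_1(G')\leq r-s$ be the minimal number of generators of $G'$, and choose a lift $g_1,\dots,g_t\in G$ of a minimal generating set of $G'$. Set $\tilde G:=\langle g_1,\dots,g_t\rangle$. Since $\tilde G$ surjects onto $G'$ we have $G=H+\tilde G$, hence $\ell^{k-1}G=\ell^{k-1}H+\ell^{k-1}\tilde G$. Consequently the $\FF_\ell$-space $\ell^{k-1}G/\ell^k G$ is the sum of the images of $\ell^{k-1}H$ and $\ell^{k-1}\tilde G$. The first contributes a subspace that is a quotient of $\ell^{k-1}H/\ell^k H$ and therefore of dimension at most $d_k(H)$; the second contributes a subspace that is a quotient of $\ell^{k-1}\tilde G/\ell^k\tilde G$ and therefore of dimension at most $d_k(\tilde G)\leq d_1(\tilde G)\leq t\leq r-s$, since $\tilde G$ is generated by $t$ elements. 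Adding the two bounds gives the required inequality.

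The main subtlety — and the reason one cannot simply invoke a one-line snake-lemma calculation — is that the naive bound $d_k(G)\leq d_k(H)+d_k(G')$ fails in general; the simplest counterexample is $G=\ZZ/\ell^2$, $H=\ell G$, $G'=\ZZ/\ell$ at $k=2$, where it would read $1\leq 0$. The role of the auxiliary subgroup $\tilde G$ is precisely to replace the potentially too-small quantity $d_k(G')$ by the coarser but robust bound $r-s$ on the minimal number of generators of $G'$, which is exactly the quantity our hypothesis controls.
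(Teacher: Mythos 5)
Your proof is correct, and it takes a genuinely different route from the paper's. The paper fixes an index $i$, chooses cyclic-summand generators $v_1,\dots,v_r$ of $G$ and $u_1,\dots,u_s$ of $H$, sets $H'=\langle u_1,\dots,u_i\rangle$, and observes that since $G/H'$ needs at most $r-i$ generators, the $r-i+1$ elements $v_i,\dots,v_r$ must satisfy a relation $\sum_{j\geq i}a_jv_j\in H'$ with some $a_j$ a unit; if $m_i>n_i$ then $\ell^{n_i}$ kills $H'$ but not $\sum a_jv_j$, a contradiction. Your argument instead repackages the Hodge-polygon comparison uniformly over all $k$ as the inequality $d_k(G)\leq d_k(H)+(r-s)$ for $d_k(M)=\dim_{\FF_\ell}(\ell^{k-1}M/\ell^k M)$, and proves it by writing $G=H+\tilde G$ for a lift $\tilde G$ of a minimal generating set of $G'$ and bounding the two contributions to $\ell^{k-1}G/\ell^kG$ separately. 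The two proofs are of comparable length; yours is more structural (it isolates $d_k$ as the right invariant and makes visible exactly where the generator bound on $G'$ enters, and your counterexample pinpointing why $d_k(G)\leq d_k(H)+d_k(G')$ fails is a nice sanity check), while the paper's is a bare-hands contradiction at a single index. One small remark: the paper's statement of the dual inequality writes $m_i\geq n'_i$ for $s+1\leq i\leq r$ even though $n'$ is only indexed up to $r-s$; you tacitly adopt the natural re-indexing $m_{s+j}\geq n'_j$, which is what the dictionary $d_k(G)\geq d_k(G')$ encodes, and that is the intended reading.
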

\begin{proof}
The group $G$ is isomorphic to $$\oplus_{i=1}^r \ZZ/\ell^{m_i}\ZZ.$$ 
Choose $v_1,\dots,v_r\in G$ such that $v_i$ generates the summand $\ZZ/\ell^{m_i}\ZZ$. We choose generators $u_1,\dots,u_s\in H$ in the same way. Let $H'$ be a subgroup of $H$ generated by $u_1,\dots,u_i$.
The group $G'$ is generated by $r-s$ elements; thus $G/H'$ is generated by $r-i$ elements, and there exists a linear combination $$v=\sum_{j=i}^r a_jv_j$$ such that $v\in H'$, and there exists $j$ such that $\nu_\ell(a_j)=0$. Now assume that $m_i> n_i$. It follows that $\ell^{n_i}v\neq 0$. On the other hand, $\ell^{n_i}u=0$ for any $u\in H'$. This contradiction concludes the proof.

The last assertion can be proved by the dual argument.
\end{proof}

\begin{thm}\label{main}
Let $T$ be a Tate module of rank $r$, and $T_1$ be a Tate submodule of rank $s$ such that $T_2=T/T_1$ is torsion-free. Suppose that $\Hp(T_1)=(n_1,\dots,n_s)$, and $\Hp(T)=(m_1,\dots,m_r)$.
Then $m_i\leq n_i$ for $1\leq i\leq s$. 

Dually, if $\Hp(T_2)=(n'_1,\dots,n'_{r-s})$, then $m_i\geq n'_i$ for $s+1\leq i\leq r$.
\end{thm}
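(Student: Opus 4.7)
The plan is to reduce Theorem \ref{main} to Lemma \ref{key_lemma} by deriving a short exact sequence of finite abelian $\ell$--groups
\[
0\to T_1/(1-x)T_1\to T/(1-x)T\to T_2/(1-x)T_2\to 0
\]
from the given exact sequence $0\to T_1\to T\to T_2\to 0$ of Tate modules.

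First I would apply the snake lemma to the commutative diagram whose rows are $0\to T_1\to T\to T_2\to 0$ and whose vertical arrows are multiplication by $1-x$. This yields the six-term exact sequence
\[
0\to \Ker(1-x|T_1)\to \Ker(1-x|T)\to \Ker(1-x|T_2)\to T_1/(1-x)T_1\to T/(1-x)T\to T_2/(1-x)T_2\to 0.
\]
Since $\Hp(T)$ is defined, the cokernel $T/(1-x)T$ is a finite $\ell$--group, so after tensoring with $\QQ_\ell$ the map $1-x$ becomes a surjective endomorphism of a finite-dimensional $\QQ_\ell$--vector space and is therefore injective; hence $1-x$ is injective on the $\ZZ_\ell$--free module $T$ and on its submodule $T_1$. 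This kills the first two kernels. The remaining kernel $\Ker(1-x|T_2)$ then injects into the finite group $T_1/(1-x)T_1$, so it is a finite subgroup of the torsion-free $\ZZ_\ell$--module $T_2$ and must vanish.

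With the short exact sequence in hand, I would note that $T_2$ is $\ZZ_\ell$--free of rank $r-s$, so $T_2/(1-x)T_2$ is generated by at most $r-s$ elements. Applying Lemma \ref{key_lemma} with $H=T_1/(1-x)T_1$, $G=T/(1-x)T$, and $G'=T_2/(1-x)T_2$ gives $m_i\leq n_i$ for $1\leq i\leq s$, and the dual half of the same lemma yields $m_i\geq n'_i$ for $s+1\leq i\leq r$.

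The only content beyond a direct appeal to Lemma \ref{key_lemma} is the vanishing of the connecting homomorphism in the snake lemma; the torsion-freeness hypothesis on $T_2$ is built into the theorem precisely so that this vanishing holds and the induced sequence of $\ell$--groups remains left exact.
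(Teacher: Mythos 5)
Your proof is correct and follows the same route as the paper: form the short exact sequence $0\to T_1/(1-x)T_1\to T/(1-x)T\to T_2/(1-x)T_2\to 0$ via the snake lemma and then invoke Lemma~\ref{key_lemma}. The only difference is that you spell out why the connecting homomorphism vanishes (injectivity of $1-x$ on $T$ and $T_1$, plus torsion-freeness of $T_2$ forcing $\Ker(1-x|T_2)=0$), a detail the paper leaves implicit; this is exactly the point where the hypothesis that $T_2$ be torsion-free is used.
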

\begin{proof} We prove that $m_i\leq n_i$ for $1\leq i\leq s$. The second assertion follows from the dual argument.
The transformation $E=1-x$ acts on the short exact sequence of Tate modules:
$$0\to T_1\to T \to T_2\to 0.$$
By the snake lemma we get a exact sequence of abelian groups:
$$0\to T_1/ET_1\to T/ET\to T_2/ET_2\to 0.$$
The Lemma~\ref{key_lemma} completes the proof.
\end{proof}

\begin{lemma}\label{lem1}
Let $L$ be a finite extension of $\QQ_\ell$ generated by an element $\alpha$ with $\ell$-adic valuation $\lambda\in\QQ$. Denote by $\OO$ the ring of integers of $L$. Then there exists an isomorphism of abelian groups: 
$$\OO/\alpha \OO\cong(\ZZ/\ell^{\lfloor\lambda\rfloor}\ZZ)^{[L:\QQ_\ell]-e}\oplus(\ZZ/\ell^{\lceil\lambda\rceil}\ZZ)^{e},$$ 
where $\lambda=\lfloor\lambda\rfloor+e/[L:\QQ_\ell]$.
\end{lemma}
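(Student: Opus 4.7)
The plan is to replace $\alpha$ by a uniformizer and then determine the invariant factors of $M:=\OO/\pi^m\OO$, where $\pi$ is a uniformizer of $\OO$ and $m=v_L(\alpha)$ is the normalized valuation. This reduction is legitimate because $\OO$ is a discrete valuation ring, so $\alpha\OO=\pi^m\OO$. Letting $e'$ and $f$ denote the ramification index and residue degree of $L/\QQ_\ell$, one has $n:=[L:\QQ_\ell]=e'f$, $\ell\OO=\pi^{e'}\OO$, and $m=e'\lambda$. Writing $m=qe'+r$ with $0\leq r<e'$ by Euclidean division, I would identify $q=\lfloor\lambda\rfloor$ and $fr=e$, so that the target becomes
$$M\cong(\ZZ/\ell^q\ZZ)^{n-fr}\oplus(\ZZ/\ell^{q+1}\ZZ)^{fr}.$$

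The main step would be to establish three observations about $M$. First, $\ell^{q+1}M=0$, which follows from the inclusion $\ell^{q+1}\OO=\pi^{(q+1)e'}\OO\subseteq\pi^m\OO$ (since $(q+1)e'\geq m$); this bounds every invariant factor by $\ell^{q+1}$. Second, multiplication by $\pi^{qe'}$ identifies $\ell^qM=\pi^{qe'}\OO/\pi^m\OO$ with $\OO/\pi^r\OO$, an abelian group of order $\ell^{fr}$ read off from the $\pi$-adic filtration whose graded pieces are copies of the residue field $\FF_{\ell^f}$. Third, when $q\geq 1$ the reduction $M/\ell M\cong\OO/\pi^{e'}\OO$ is an $\FF_\ell$-vector space of dimension $fe'=n$, so $M$ splits into exactly $n$ nonzero cyclic $\ell$-summands.

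To finish, I would write $M=\bigoplus_{i=1}^{n}\ZZ/\ell^{a_i}\ZZ$ with $1\leq a_i\leq q+1$ and extract the exponents by counting. The identity $|\ell^qM|=\ell^{\#\{i:a_i=q+1\}}=\ell^{fr}$ forces exactly $fr$ of the exponents to equal $q+1$; then matching total cardinalities $\sum_i a_i=fm=qn+fr$ pins the remaining $n-fr$ exponents at $q$. The degenerate case $q=0$ would be handled separately: the first observation then gives $\ell M=0$, and $|M|=\ell^{fr}=\ell^e$ yields $M\cong(\ZZ/\ell\ZZ)^e$, consistent with the formula under the convention $\ZZ/\ell^0\ZZ=0$.

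I do not anticipate a deep difficulty; the argument is essentially bookkeeping about filtrations by powers of $\pi$. The only point that requires care is keeping the ramification index $e'$ of $L/\QQ_\ell$ notationally distinct from the integer $e$ appearing in the statement, and verifying the identity $e=fr$ through the Euclidean division of $v_L(\alpha)$ by $e'$.
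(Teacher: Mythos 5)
Your argument is correct and complete. The reduction to $\OO/\pi^m\OO$ via $\alpha\OO=\pi^m\OO$ is legitimate since $\OO$ is a DVR; the Euclidean division $m=qe'+r$ correctly yields $q=\lfloor\lambda\rfloor$ and $e=fr$ from $\lambda=q+r/e'=\lfloor\lambda\rfloor+e/(e'f)$; and the three observations ($\ell^{q+1}M=0$, $|\ell^qM|=\ell^{fr}$, and $\dim_{\FF_\ell}M/\ell M=n$ when $q\geq1$) do pin down the invariant factors by the counting you describe, with the $q=0$ case handled separately and correctly.

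The one thing to note is that the paper does not actually give a proof: it simply refers to Lang's \emph{Algebraic Number Theory}, Proposition I.7.23, which supplies the structural statement in one stroke. Your proof is therefore not the same as the paper's but a self-contained replacement for that citation, deriving the result directly from the $\pi$-adic filtration of $\OO$ (each graded piece a copy of the residue field $\FF_{\ell^f}$) together with an invariant-factor count. What the paper's citation buys is brevity; what your argument buys is transparency and independence from the reference, and it also makes visible exactly where the boundary cases ($r=0$, i.e.\ $\lambda\in\ZZ$; and $q=0$, where the formula relies on the convention $\ZZ/\ell^0\ZZ=0$) enter. A small stylistic point: you carefully distinguish the ramification index $e'$ from the $e$ in the statement, which is wise, since conflating them is the most likely source of error here.
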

The proof is an easy application of~\cite[Proposition I.7.23]{Lang}.

\begin{thm}\label{main_cor}
Let $T$ be a Tate module of rank $r$, and let $\Hp(T)=(m_1,\dots,m_r)$. Put $Q(t)=f(1-t)$.
Let $\alpha$ be a root of $Q$ with multiplicity $s$ and $\ell$-adic valuation $$\lambda=\lfloor\lambda\rfloor+\frac{e}{[L:\QQ_\ell]}\text{, and let } s'=\left\lceil\frac{([L:\QQ_\ell]-e)s}{[L:\QQ_\ell]}\right\rceil,$$ where $L=\QQ_\ell(\alpha)$.
Then $$m_{s'}\leq\lfloor\lambda\rfloor,~~m_s\leq\lceil\lambda\rceil,~~m_{r-s'+1}\geq\lceil\lambda\rceil\text{, and }m_{r-s+1}\geq\lfloor\lambda\rfloor.$$
\end{thm}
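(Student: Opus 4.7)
The plan is to localize at the Galois orbit of $\beta:=1-\alpha$ inside $T$, transport the problem to a module over the DVR $\OO_L$ via Lemma~\ref{lem1}, and feed the result into Theorem~\ref{main}. Let $h(t)\in\ZZ_\ell[t]$ be the minimal polynomial of $\beta$ over $\QQ_\ell$; since $\alpha$ has multiplicity $s$ in $Q(t)=f(1-t)$, the factor $h^s$ divides $f_\ell$. Semisimplicity of $x$ on $T\otimes\QQ_\ell$ splits $T\otimes\QQ_\ell=V_1\oplus V_2$, where $V_1$ is the $h$-isotypic summand, an $s$-dimensional $L$-vector space on which $1-x$ acts as multiplication by $\alpha$. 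Setting $T_1:=T\cap V_1$ and $T_2:=T\cap V_2$, the module $T_1$ is a Tate submodule of rank $s[L:\QQ_\ell]$ and $T/T_1\hookrightarrow V_2$ is torsion-free.

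The upper bounds $m_{s'}\le\lfloor\lambda\rfloor$ and $m_s\le\lceil\lambda\rceil$ would follow from the first half of Theorem~\ref{main} applied to $0\to T_1\to T\to T/T_1\to 0$, once I show that $\Hp(T_1)$ has at least $s'$ entries $\le\lfloor\lambda\rfloor$ and at least $s$ entries $\le\lceil\lambda\rceil$. To extract these counts, I would enlarge $T_1$ to the smallest $\OO_L$-stable lattice $\widetilde T_1:=\OO_L\cdot T_1\subseteq V_1$. Since $\OO_L$ is a DVR, $\widetilde T_1$ is free of rank $s$ over $\OO_L$, and Lemma~\ref{lem1} gives
$$\widetilde T_1/\alpha\widetilde T_1\cong(\ZZ/\ell^{\lfloor\lambda\rfloor}\ZZ)^{s([L:\QQ_\ell]-e)}\oplus(\ZZ/\ell^{\lceil\lambda\rceil}\ZZ)^{se},$$
so $\Hp(\widetilde T_1)$ consists of $s([L:\QQ_\ell]-e)$ copies of $\lfloor\lambda\rfloor$ followed by $se$ copies of $\lceil\lambda\rceil$. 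The quotient $C:=\widetilde T_1/T_1$ is finite and killed by the conductor of $\ZZ_\ell[\beta]$ in $\OO_L$, and the snake lemma applied to multiplication by $\alpha$ on $0\to T_1\to\widetilde T_1\to C\to 0$ yields
$$0\to\ker(\alpha|C)\to T_1/\alpha T_1\to\widetilde T_1/\alpha\widetilde T_1\to C/\alpha C\to 0.$$
A careful Smith-normal-form comparison of elementary divisors along this sequence extracts the claim that at least $s'=\lceil([L:\QQ_\ell]-e)s/[L:\QQ_\ell]\rceil$ of the $\lfloor\lambda\rfloor$-entries of $\Hp(\widetilde T_1)$ persist in $\Hp(T_1)$.

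The lower bounds $m_{r-s'+1}\ge\lceil\lambda\rceil$ and $m_{r-s+1}\ge\lfloor\lambda\rfloor$ follow dually from the second half of Theorem~\ref{main} applied to $0\to T_2\to T\to T/T_2\to 0$: the quotient $T/T_2$ is a lattice in $V_1$ of rank $s[L:\QQ_\ell]$, and the same Hodge-polygon computation, now read from the large end of the sorted sequence, supplies the required lower bounds on the largest entries of $\Hp(T)$.

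The main obstacle will be the Smith-normal-form bookkeeping: $\ZZ_\ell[\beta]$ is typically a proper suborder of $\OO_L$, so $T_1$ need not be $\OO_L$-stable and $\Hp(T_1)$ genuinely differs from $\Hp(\widetilde T_1)$. Verifying that the floor $s'=\lceil([L:\QQ_\ell]-e)s/[L:\QQ_\ell]\rceil$ is precisely the count that always survives the passage from $\widetilde T_1$ to $T_1$---capturing the worst-case degradation by roughly a factor of $[L:\QQ_\ell]$ from the ``ideal'' count $s([L:\QQ_\ell]-e)$---is the technical heart of the argument.
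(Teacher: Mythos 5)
Your high-level strategy---isolate the part of $T$ on which $1-x$ acts by $\alpha$, compute its Hodge polygon via Lemma~\ref{lem1}, and feed the result into Theorem~\ref{main}---is the same as the paper's. But the paper executes it differently in a way that dissolves exactly the obstacle you flag as the ``technical heart,'' and your proposed fix for that obstacle is left unproved and is not obviously workable.

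The paper's move is to tensor $T$ up to $\OO=\OO_L$ \emph{before} cutting out the eigenspace: set $S=T\otimes_{\ZZ_\ell}\OO$ and $S_1=\ker(1-x-\alpha)\subset S$. Because $S$ is an $\OO$-module from the start, $S_1$ is automatically a saturated $\OO$-submodule, hence $\OO$-free of rank $s$, and $S_1/(1-x)S_1=S_1/\alpha S_1\cong(\OO/\alpha\OO)^s$ directly by Lemma~\ref{lem1}. Moreover $S/(1-x)S\cong(T/(1-x)T)\otimes_{\ZZ_\ell}\OO\cong(T/(1-x)T)^{[L:\QQ_\ell]}$ as abelian groups, so $\Hp(S)$ is simply $\Hp(T)$ with each entry repeated $[L:\QQ_\ell]$ times. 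Then Theorem~\ref{main} applied to $0\to S_1\to S\to S/S_1\to 0$ reads off all four inequalities by matching indices: the $([L:\QQ_\ell]-e)s$-th entry of $\Hp(S)$ is $m_{s'}$ and the $s[L:\QQ_\ell]$-th entry is $m_s$, giving $m_{s'}\le\lfloor\lambda\rfloor$ and $m_s\le\lceil\lambda\rceil$, with the other two by duality.

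You instead work with $T_1=T\cap V_1$ inside $T$ itself. As you correctly note, $T_1$ is a $\ZZ_\ell[\beta]$-module but typically not $\OO$-stable, and $\ZZ_\ell[\beta]$ is generally a non-maximal order, so $T_1$ need not be free over a DVR and Lemma~\ref{lem1} does not apply to $T_1/\alpha T_1$. You propose to bridge this by comparing $T_1$ to $\widetilde T_1=\OO\cdot T_1$ via the snake sequence for multiplication by $\alpha$, but the four-term exact sequence you write does \emph{not} determine the elementary divisors of $T_1/\alpha T_1$ from those of $\widetilde T_1/\alpha\widetilde T_1$ and $C=\widetilde T_1/T_1$: there is no a priori control on $C$, and knowing a finite abelian group up to a finite kernel and cokernel leaves its Hodge polygon essentially unconstrained beyond the total order. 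So ``a careful Smith-normal-form comparison'' is a genuine gap, not a routine verification. (Indeed, the statement you need about $\Hp(T_1)$ is itself an instance of the theorem being proved, applied to $T_1$ as a Tate module with characteristic polynomial $h^s$; you would need the tensor-with-$\OO$ argument to establish it, so your route either circles back to the paper's proof or needs a new idea you have not supplied.) The one concrete change that would repair your write-up is to replace $\widetilde T_1=\OO\cdot T_1\subset V_1$ by $S_1\subset T\otimes_{\ZZ_\ell}\OO$, which is what the paper does.
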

\begin{proof}
Denote by $\OO$ the ring of integers of $L$. Let $S=T\otimes_{\ZZ_\ell}\OO$. 
The Tate module $S$ contains the submodule $S_1=\ker(1-x-\alpha)$. Clearly, $$S_1/(1-x)S_1\cong (\OO/\alpha \OO)^s\text{, and }S/(1-x)S\cong(T/(1-x)T)\otimes_{\ZZ_\ell}\OO\cong(T/(1-x)T)^{[L:\QQ_\ell]}.$$ By Lemma~\ref{lem1}, the Hodge polygon of $(\OO/\alpha \OO)^s$ 
has $([L:\QQ_\ell]-e)s$ slopes $\lfloor\lambda\rfloor$ and $es$ slopes $\lceil\lambda\rceil$. Thus the slope number $[L:\QQ_\ell]s$ is equal to $\lceil\lambda\rceil$.
By Theorem~\ref{main}, $m_{s'}\leq\lfloor\lambda\rfloor$, and $m_s\leq\lceil\lambda\rceil$.
The rest can be proved dually.
\end{proof}

\begin{corollary}\label{main_cor2}
Let $A$ be an abelian variety over $k$, and let $\Hp(T_\ell^{(1)}(A))=(n_1,\dots,n_r)$.
Put $T=\wedge^e T_\ell(A)$, and $\Hp(T)=(m_1,\dots,m_{r'})$, where $r'=\binom{r}{e}$. Clearly, for any $i$ there exist $i_1< \dots < i_e$ such that $m_i=n_{i_1}+\dots +n_{i_e}$.
Let $Q(t)=\det(t-E|T)$, where $E$ is induced by $1-x$.
Let $\alpha$ be a root of $Q$ with multiplicity $s$ and $\ell$-adic valuation $\lambda\in\QQ$. 
Then $m_s\leq\lceil\lambda\rceil$, and $m_{r'-s+1}\geq\lfloor\lambda\rfloor$.
\end{corollary}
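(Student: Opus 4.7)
The plan is to deduce the corollary from Theorem~\ref{main_cor} applied directly to $T$, viewed in its own right as a Tate module. So the real work is twofold: first, to verify that $T=\wedge^e T_\ell(A)$ (with the implicit reading that gives $r'=\binom{r}{e}$, namely $T=\wedge^e T_\ell^{(1)}(A)$) is a Tate module in the sense of Section~2, and second, to justify the structural identity $m_i=n_{i_1}+\dots+n_{i_e}$ that the statement describes as clear.

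First I would observe that $T$ is a free $\ZZ_\ell$-module of rank $r'$ carrying the induced Frobenius action $\wedge^e F$. Since $F$ acts semisimply on $T_\ell(A)\otimes\QQ_\ell$, the same is true for $\wedge^e F$ on $T\otimes\QQ_\ell$, because the exterior-power functor preserves semisimple automorphisms. Consequently $T$ qualifies as a Tate module in the sense of the paper, with characteristic polynomial $\det(t-x|T)$ playing the role of $f$ and with $Q(t)=\det(t-E|T)$ matching $f(1-t)$ up to the sign $(-1)^{r'}$. The polynomial $Q$ therefore fits the formalism used in Theorem~\ref{main_cor}.

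Next, for the structural observation that $m_i=n_{i_1}+\dots+n_{i_e}$, I would choose a $\ZZ_\ell$-basis of $T_\ell^{(1)}(A)$ adapted to the elementary divisor decomposition of the endomorphism $1-x$, so that the Hodge polygon slopes $n_1,\dots,n_r$ appear directly as valuations of the invariant factors. The induced basis $e_{i_1}\wedge\dots\wedge e_{i_e}$ on $\wedge^e T_\ell^{(1)}(A)$ then gives, via the multiplicative behaviour of determinants on exterior powers, precisely the claimed decomposition of each $m_i$ as a sum of $e$ of the $n_j$'s.

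Finally, with $T$ recognised as a Tate module of rank $r'$, Theorem~\ref{main_cor} applies: for the root $\alpha$ of $Q$ of multiplicity $s$ and $\ell$-adic valuation $\lambda$, the second inequality of that theorem yields $m_s\leq\lceil\lambda\rceil$ and the fourth yields $m_{r'-s+1}\geq\lfloor\lambda\rfloor$. These are exactly the two bounds asserted. The main obstacle is the structural step rather than the bounds: the Hodge polygon of an exterior power is considerably subtler than its Newton polygon, and the elementary-divisor calculation on $\wedge^e T_\ell^{(1)}(A)$ has to be handled carefully; once it is done, the deduction from Theorem~\ref{main_cor} is immediate.
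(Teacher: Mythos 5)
The core strategy---treat $T=\wedge^e T_\ell^{(1)}(A)$ as a Tate module in its own right and invoke Theorem~\ref{main_cor} directly---is the right one, and since the paper offers no explicit proof of the corollary this is surely the intended route. But there is a genuine gap in how you set up the Tate module structure, and as stated it would produce the wrong bounds.

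You argue that $\wedge^e F$ acts semisimply and conclude that $T$ is a Tate module ``with characteristic polynomial $\det(t-x|T)$ playing the role of $f$ and $Q(t)=\det(t-E|T)$ matching $f(1-t)$ up to sign.'' This implicitly takes $x$ on $T$ to be $\wedge^e F$, so that $1-x=1-\wedge^e F$. But the operator $E$ of the corollary is $\wedge^e(1-F)$, the functorial exterior power of the endomorphism $1-x$ of $T_\ell^{(1)}(A)$; it is \emph{not} $1-\wedge^e F$, and the two do not even have the same characteristic polynomial up to sign. (Concretely: for $F=\diag(a,b)$ and $e=2$ one has $1-\wedge^2F=1-ab$ while $\wedge^2(1-F)=(1-a)(1-b)$, whose $\ell$-adic valuations are unrelated when $a,b\equiv 1\pmod\ell$.) With your choice of $x$, the Hodge polygon $\Hp(T)=\Hp(T/(1-x)T,r')$ would be computed from $T/(1-\wedge^e F)T$, which is \emph{not} the group whose slopes satisfy $m_i=n_{i_1}+\dots+n_{i_e}$. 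Your own second paragraph, built on Smith normal form, silently uses $\wedge^e(1-F)$; so the proposal is internally inconsistent, and Theorem~\ref{main_cor} applied with $x=\wedge^eF$ would bound the slopes of the wrong polygon.

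The fix is small but essential. Define $x'=1-\wedge^e(1-F)$ on $T$, so that $1-x'=E$ is exactly the induced operator, and take $f(t)=\det(t-x'|T)$. Each eigenvalue of $x'$ is $1-\prod_j(1-\alpha_{i_j})\equiv 1\pmod\ell$ because every $1-\alpha_i$ has positive valuation, so $f$ coincides with its own $f_\ell$ and $T$ is genuinely a Tate module over $R'=\ZZ_\ell[t]/f^{\sep}\ZZ_\ell[t]$; $x'$ is semisimple since $F$ is. Then $\Hp(T)=\Hp(T/ET,r')$ matches your Smith-normal-form computation, $f(1-t)=(-1)^{r'}Q(t)$ has the roots the corollary needs, and the second and fourth inequalities of Theorem~\ref{main_cor} give precisely $m_s\leq\lceil\lambda\rceil$ and $m_{r'-s+1}\geq\lfloor\lambda\rfloor$.
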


\section{Main results}
\begin{thm}\label{conj}
Let $f=P^rQ^s$ be a Weil polynomial, where $Q$ divides $P$, and $\deg P\leq 2$.
Suppose that $P$ is separable. Then $G$ is the group of points on some abelian variety with Weil polynomial $f$ if and only if $$G_\ell\cong \oplus_{j=1}^r G_\ell^{(j)}\oplus H,$$ where $G_\ell^{(j)}$ are $\ell$-primary abelian groups such that $\Np_\ell(P(1-t))$ lies on or above $\Hp(G_\ell^{(j)},2)$ for all $1\leq j\leq r$, and $H\cong(\ZZ_\ell/Q(1)\ZZ_\ell)^s$.
\end{thm}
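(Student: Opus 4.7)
The plan is to establish both directions via Proposition~\ref{prop_on_Tate_module}, which reduces the entire question to a prime-by-prime construction of Tate modules, combined with the Newton/Hodge comparisons of Section~3. For each prime $\ell\mid f(1)$, I first factor over $\ZZ_\ell$: $P=P_\ell P'_\ell$ and $Q=Q_\ell Q'_\ell$, collecting in $P_\ell,Q_\ell$ the irreducible factors whose roots reduce to $1$ modulo $\ell$. Since $Q\mid P$ and $P$ is separable, $Q_\ell\mid P_\ell$, both are separable, and $\deg P_\ell\le 2$; hence $f_\ell=P_\ell^r Q_\ell^s$, $f_\ell^{\sep}=P_\ell$, and $R=\ZZ_\ell[t]/P_\ell$.

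For sufficiency, I assemble $T_\ell=\bigoplus_{j=1}^r T^{(j)}\oplus\bigoplus_{i=1}^s T_H^{(i)}$, where each $T^{(j)}$ has characteristic polynomial $P_\ell$ with $T^{(j)}/(1-x)T^{(j)}\cong G_\ell^{(j)}$, and each $T_H^{(i)}$ has characteristic polynomial $Q_\ell$ with $T_H^{(i)}/(1-x)T_H^{(i)}\cong \ZZ_\ell/Q(1)\ZZ_\ell$. Each $T^{(j)}$ is produced via Proposition~\ref{prop_mf}: the Newton-above-Hodge hypothesis is exactly what permits exhibiting a matrix $X\in\Mat_{\deg P_\ell}(\ZZ_\ell[t])$ with $\det X=P_\ell$ and $X\equiv \diag(\ell^{m_1},\ell^{m_2})\bmod(1-t)$ for the prescribed Hodge invariants of $G_\ell^{(j)}$. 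Each $T_H^{(i)}$ is produced analogously by an explicit matrix factorization of $Q_\ell$ whose Smith form at $t=1$ is $\diag(1,Q_\ell(1))$ (or simply $Q_\ell$ itself when $\deg Q_\ell=1$), giving the required cyclic quotient.

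For necessity, set $T=T_\ell^{(1)}(A)$, so that $G_\ell=T/(1-x)T$. The plan is to produce a direct-sum decomposition $T \cong \bigoplus_{j=1}^r T^{(j)} \oplus \bigoplus_{i=1}^s T_H^{(i)}$ of $T$ as an $R$-lattice matching the isotypic decomposition of $V = T\otimes\QQ_\ell$ (which contains $r$ copies of the $P_\ell$-isotype plus $s$ extra copies of the $Q_\ell$-isotype). Each rank-$\deg P_\ell$ summand $T^{(j)}$ has characteristic polynomial $P_\ell$; applying Corollary~\ref{main_cor2} to it with $\alpha=1-$(root of $P_\ell$) taken with multiplicity $\deg P_\ell$ yields the Newton-above-Hodge inequality for the associated $G_\ell^{(j)}=T^{(j)}/(1-x)T^{(j)}$. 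Each rank-$\deg Q_\ell$ summand $T_H^{(i)}$ contributes a cyclic $\ZZ_\ell/Q(1)\ZZ_\ell$-factor to $H$, because the minimal polynomial of $x$ on it is $Q_\ell$ and the elementary divisors at $t=1$ must concentrate into a single cyclic summand.

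The main obstacle is the $R$-lattice decomposition step in necessity: a general $R$-lattice $T$ need not split into rank-$\deg P_\ell$ summands, particularly when $R=\ZZ_\ell[\alpha]$ is a proper subring of its normalization $\OO_L$, which occurs whenever $\alpha$ does not generate the maximal order of $L=\QQ_\ell(\alpha)$. The key fact making the decomposition tractable is that $\deg P_\ell\le 2$ forces $R$ to be a $\ZZ_\ell$-order of rank at most $2$ in an \'etale $\QQ_\ell$-algebra of degree at most $2$; I expect to exploit this by iteratively applying Theorem~\ref{main} to peel off rank-$\deg P_\ell$ sub- and quotient Tate modules from $T$, and then matching dimensions and elementary divisors to force the required direct-sum decomposition at the level of the finite abelian group $G_\ell$ even when it fails on $T$ itself.
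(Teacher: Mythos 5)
Your high-level framing is right — reduce to Tate modules by Proposition~\ref{prop_on_Tate_module}, then analyse Hodge polygons — and you correctly identify the central obstacle: $T$ is an $R$-lattice for a possibly non-maximal order $R=\ZZ_\ell[t]/P_\ell$, so $T$ need not decompose as a direct sum of rank-$\deg P_\ell$ summands. But your proposal stops at identifying the obstacle; you say you ``expect to exploit'' the constraint $\deg P_\ell\le 2$ by applying Theorem~\ref{main} iteratively, without supplying the argument. That is exactly where the paper has to do real work, and Theorem~\ref{main} alone is not enough: it gives only one-sided inequalities on the slopes of a sub- or quotient Tate module, which is not sufficient to conclude that $T_1/(1-x)T_1$ together with $T_2/(1-x)T_2$ recovers a partition of the slopes of $G_\ell$.

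The missing ingredient is the matrix-factorization duality from Proposition~\ref{prop_mf}. The paper's necessity argument is an induction on $\deg f$: it lifts a generator realizing the top slope $m_{2r+s}$, forms the saturated $F$-invariant rank-$2$ (or rank-$1$) sublattice $T_1\subset T$, and then must prove the key identity $m_1+m_{2r+s}=m$ (with $m=\nu_\ell(P(1))$) before it can identify $T_1/(1-x)T_1$ and control $\Hp(T/T_1)$. Theorem~\ref{main} gives one inequality, $m_1\le m-m_{2r+s}$; the reverse inequality comes from passing to the ``dual'' module $T'$ determined by swapping the pair $(X,Y)$ to $(Y,X)$ in the matrix factorization, which replaces each slope $m_i$ by $m-m_i$ and lets one apply the same argument to $T'$. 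Without this duality step, you cannot locate the two slopes of $T_1$ inside the list $(m_1,\dots,m_{2r+s})$, the quotient's Hodge polygon is not pinned down, and the induction does not close. In your language: the ``matching dimensions and elementary divisors'' step is genuine content, and it is precisely the place where the $R$-lattice non-decomposability must be actively circumvented rather than noted.
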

\proof
If $\deg P_\ell\leq1$, the statement is trivial. We assume that $P_\ell=P$, and $\deg P=2$; therefore $f_\ell=f$. 
It is easy to construct a variety with a given group of points. By Proposition~\ref{prop_on_Tate_module}, we have to construct Tate modules $T_\ell$ over $R=\ZZ_\ell[t]/P(t)\ZZ_\ell[t]$ such that $T_\ell/(1-x) T_\ell\cong G_\ell$. By~\cite[Theorem 3.2]{Ry1}, there exist Tate modules $T_\ell^{(j)}$ such that $$T_\ell^{(j)}/(1-x) T_\ell^{(j)}\cong G_\ell^{(j)}$$ and $P$ is the characteristic polynomial of Frobenius action on $T_\ell^{(j)}$. Clearly, there exists a Tate module $T_H$ such that $T_H/(1-x) T_H\cong H$ and $Q^s$ is the characteristic polynomial of the Frobenius action on $T_H$. Put $T_\ell=\oplus_j T_\ell^{(j)}\oplus T_H$.

Let $m=\nu_\ell(P(1))$, and let $\lambda_1\leq\lambda_2$ be slopes of $\Np_\ell(P(1-t))$.
We prove by induction on $\deg f$ that if $T$ is an $R$-module and the characteristic polynomial of the action of $t$ is $f=P^rQ^s$, then $$T/(1-x)T\cong \oplus_{j=1}^r G_\ell^{(j)}\oplus(\ZZ_\ell/Q(1)\ZZ_\ell)^s,$$ where  $\Hp(G_\ell^{(j)},2)=(m_j,m_{2r+s-j})$ such that $m_j\leq\lambda_1$, and $m_j+m_{2r+s-j}=m$ for $1\leq j\leq r$. 

The statement is clear, if $\deg f\leq 2$. Assume $\deg f>2$. Let $\Hp(T)=(m_1,\dots,m_{2r+s})$. 
Then  $$T/(1-x) T\cong\oplus_{i=1}^{2r+s} \ZZ/\ell^{m_i}\ZZ.$$ 
Choose $v_1,\dots,v_{2r+s}\in T/(1-x) T$ such that $v_i$ generates the summand $\ZZ/\ell^{m_i}\ZZ$. Suppose that $w\in T$ lifts $v_{2r+s}$. Let $T_1\subset T$ be the saturated sublattice containing $w$ and $Fw$. 
Clearly, $T_1$ is $F$-invariant.
If the characteristic polynomial of the Frobenius action on $T_1$ is $Q$, then $T_1/(1-x) T_1\cong\ZZ_\ell/Q(1)\ZZ_\ell$. Thus $\Hp(T/T_1)=(m_1,\dots,m_{2r+s-1})$, and we use induction hypothesis.

Suppose that the characteristic polynomial of the Frobenius action on $T_1$ is $P$. Then $$T_1/(1-x) T_1\cong \ZZ/\ell^{m-m_{2r+s}}\ZZ\oplus\ZZ/\ell^{m_{2r+s}}\ZZ.$$ 
By Theorem~\ref{main}, 
\begin{equation}\label{eq1}
m_1\leq m-m_{2r+s}.
\end{equation}

Now we use the argument from the proof of~\cite[Theorem 3.4]{Ry13}.
By Proposition~\ref{prop_mf}, $T$ corresponds to a matrix factorization $(X,Y)$ such that $\det X=f$ and
$$X\equiv \diag(\ell^{m_1},\dots,\ell^{m_{2r+s}})\mod (1-t)\ZZ_\ell[t].$$
The matrix factorization $(Y,X)$ corresponds to a module $T'$ over
$R$, which is generated by $2r+s$ elements and the characteristic
polynomial of $x$ is equal to $$\det Y=P^{2r+s}/f.$$ Moreover,
$$Y\equiv \diag(\ell^{m-m_1},\dots,\ell^{m-m_{2r+s}})\mod (1-t)\ZZ_\ell[t],$$
i.e., $$\Hp(T')=(m-m_{2r+s},\dots,m-m_1).$$ 
As before, either $m_1=\nu_\ell(Q(1))$, and we are reduced to the case $\deg f=r+s-1$, or we have the inequality~(\ref{eq1}) for $T'$:
$$m-m_{2r+s}\leq m-(m-m_1)=m_1.$$ It follows that $m=m_1+m_{2r+s}$.

We proved that $$T_1/(1-x) T_1\cong \ZZ/\ell^{m_1}\ZZ\oplus\ZZ/\ell^{m_{2r+s}}\ZZ,$$ thus, by Theorem~\ref{main_cor}, $m_1\leq\lambda_1$.

As in the proof of Theorem~\ref{main}, we have an exact sequence of $R$-modules:
$$0\to T_1\to T \to T_2=T/T_1\to 0,$$ and the corresponding sequence of abelian groups: $$0\to T_1/(1-x)T_1\to T/(1-x)T\to T_2/(1-x)T_2\to 0.$$
Clearly, $T_2$ is a free $\ZZ_\ell$-module, and the characteristic polynomial of $x$ on $T_2$ is $P^{r-1}Q^s$.
We claim that $$\Hp(T_2)=(m_2,\dots,m_{2r-2+s}).$$ Indeed, $T/(1-x)T$ is generated by $v_1,\dots,v_{2r+s}$ and, by construction of $T_1$, the group $T_1/(1-x)T_1$ is generated by $v_{2r+s}$ and some $v$ such that $\ell^{m_1}v=0$. Moreover, since $T_2$ is generated by $2r+s-2$ elements, we may assume that $$v=\sum_{j=1}^{2r+s} a_jv_j,~a_j\in\ZZ$$ where $\ell^{m_1}a_jv_j=0$, and there exists $j$ such that $\nu_\ell(a_j)=0$. In particular, $\nu_\ell(a_j)\geq m_j-m_1$. Thus we can reorder $v_1,\dots,v_j$ and assume that $\nu_\ell(a_1)=0$.
Now we see that the images of $v_2,\dots,v_{2r-2+s}$ generate the subgroup 
$$\oplus_{j=2}^{2r-2+s} \ZZ/\ell^{m_j}\ZZ$$ of the same order as $T_2/(1-x)T_2$.
This proves the claim, and the induction hypothesis for $T_2$ proves the theorem.
\qed

\begin{thm}\label{conj2}
Let $f(t)=P(t)(t\pm\sqrt{q})^r$ be a Weil polynomial, where $P$ is separable of degree $2$, and $P(\mp\sqrt{q})\neq 0$. 
Let 
\begin{itemize}
\item $G$ be a finite abelian group of order $f(1)$;
\item $\Hp(G_\ell,r+2)=(m_1,\dots,m_{r+2})$;
\item $\lambda_1\leq\lambda_2$ be slopes of $\Np_\ell(P(1-t))$;
\item $\lambda_q=\nu_\ell(1\mp\sqrt{q})$.
\end{itemize}
Then $G$ is the group of points on some abelian variety with Weil polynomial $f$ if and only if 
\begin{enumerate}
\item $\Np_\ell(f(1-t))$ lies on or above $\Hp(G_\ell,r+2)$;
\item $m_r\leq \lambda_q$;
\item $m_{3}\geq \lambda_q$;
\item $m_1+m_{r+1}\leq \lambda_q+\lambda_1$.
\end{enumerate}
\end{thm}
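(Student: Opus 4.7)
The plan is to mirror the two-sided structure of the proof of Theorem~\ref{conj}: derive the four conditions from the existence of a suitable $A$, then construct such an $A$ from any $G$ satisfying them. First I reduce to the case $f_\ell = f$ as in the opening paragraph of the proof of Theorem~\ref{conj}, so that $T := T_\ell^{(1)}(A)$ has rank $r+2$; write $\alpha$ for the Frobenius eigenvalue of multiplicity $r$, so $1-\alpha = 1\mp\sqrt q$ has valuation $\lambda_q$. Condition~(1) is the standard statement that $\Np_\ell(f(1-t))$ dominates the Hodge polygon of $T/(1-F)T \cong G_\ell$. Conditions~(2) and~(3) follow from Theorem~\ref{main_cor} applied to the root $1-\alpha \in \ZZ_\ell$ of $Q(t) = f(1-t)$ of multiplicity $s=r$ and valuation $\lambda_q$: since $L = \QQ_\ell$, $[L:\QQ_\ell]=1$, $e=0$, and hence $s' = s = r$, the theorem yields $m_r \leq \lambda_q$ and $m_3 = m_{(r+2)-r+1} \geq \lambda_q$.

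The crux of necessity is condition~(4). When $P$ splits over $\QQ_\ell$ as $(t-\beta_1)(t-\beta_2)$ with $\nu_\ell(1-\beta_i)=\lambda_i$, my plan is to set
\[
T_{01} \;=\; \ker\bigl((F-\alpha)(F-\beta_1)\bigr) \;\subseteq\; T,
\]
which is automatically saturated (kernel of an endomorphism of a free $\ZZ_\ell$-module), of rank $r+1$, with $T/T_{01}$ free of rank $1$ and Frobenius $\beta_2$ acting on it. Since $T_{01}$ is $F$-invariant with $\det(t-F|T_{01}) = (t-\alpha)^r(t-\beta_1)$, applying Theorem~\ref{main_cor} twice to $T_{01}$ (once to the root $1-\alpha$ of multiplicity $r$ and valuation $\lambda_q$, once to $1-\beta_1$ of multiplicity $1$ and valuation $\lambda_1$) and combining with the identity $\sum_i n_i^{(01)} = r\lambda_q + \lambda_1$ forces $n_2^{(01)} = \cdots = n_r^{(01)} = \lambda_q$ (vacuously for $r=1$) and $n_1^{(01)} + n_{r+1}^{(01)} = \lambda_q + \lambda_1$. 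Theorem~\ref{main} then yields $m_1 \leq n_1^{(01)}$ and $m_{r+1} \leq n_{r+1}^{(01)}$, which sum to condition~(4). When $P$ is irreducible over $\QQ_\ell$, I would extend scalars to $L = \QQ_\ell(\beta_1)$ with ring of integers $\OO_L$, form $S = T\otimes_{\ZZ_\ell}\OO_L$ (whose Hodge polygon has each $m_i$ repeated $[L:\QQ_\ell]$ times, by flatness), carry out the analogous construction inside $S$ using Lemma~\ref{lem1} to describe $\OO_L/(1-\beta_1)\OO_L$, and extract condition~(4) from the corresponding instance of Theorem~\ref{main}.

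For sufficiency, I would invoke Proposition~\ref{prop_on_Tate_module}: it suffices to produce, at each prime $\ell \mid f(1)$, a Tate module $T_\ell$ with $\det(t-x|T_\ell) = f_\ell$ and $T_\ell/(1-x)T_\ell \cong G_\ell$. Conditions~(2) and~(3) pin down $m_3 = \cdots = m_r = \lambda_q$. In the generic case $m_2 = m_{r+1} = \lambda_q$, I would take $T_\ell = T_\alpha \oplus T_P$, where $T_\alpha \cong \ZZ_\ell^r$ with $F=\alpha$ supplies the $r$ central slopes and $T_P$ is a rank-$2$ Tate module with Frobenius characteristic polynomial $P$ and Hodge slopes $(m_1,m_{r+2})$; the existence of such $T_P$ is Theorem~3.2 of~\cite{Ry1}, whose hypothesis $m_1 \leq \lambda_1$ follows from (3) and (4) via $m_1 \leq (\lambda_q+\lambda_1) - m_{r+1} \leq \lambda_1$. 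In the non-generic cases where $m_2 < \lambda_q$ or $m_{r+1} > \lambda_q$, I would invoke the matrix-factorization technique of Proposition~\ref{prop_mf}, as in the final paragraph of the proof of Theorem~\ref{conj}, to build a Tate module with the prescribed Hodge polygon directly. The main obstacle is this non-generic sufficiency case: when $G_\ell$ has \emph{extra} slopes at positions $2$ and $r+1$ differing from $\lambda_q$, no $F$-invariant direct sum decomposition of $T_\ell$ realizes it, and one must run the matrix-factorization induction carefully to verify that such $G_\ell$ are still realized.
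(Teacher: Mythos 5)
Your necessity argument for condition~(4) is a genuinely different and, for the split case, arguably cleaner route than the paper's. The paper applies Corollary~\ref{main_cor2} to $\wedge^r T_\ell(A)$ with eigenvalue $(1\mp\sqrt q)^{r-1}(1-\alpha_1)$ of multiplicity $r$, which yields only $m_1+m_{r+1}\leq\lambda_q+\lceil\lambda_1\rceil$ and requires a separate argument (using $m_1+m_{r+1}\leq m_2+m_{r+2}$) to sharpen this to $\lambda_q+\lambda_1$ when $\lambda_1\notin\ZZ$. Your construction of the saturated submodule $T_{01}=\ker((F-\alpha)(F-\beta_1))$ of rank $r+1$, combined with Theorem~\ref{main_cor} inside $T_{01}$ and Theorem~\ref{main} for the inclusion $T_{01}\subset T$, gives $m_1+m_{r+1}\leq n_1^{(01)}+n_{r+1}^{(01)}=\lambda_q+\lambda_1$ in one stroke, and sidesteps the unproved claim in Corollary~\ref{main_cor2} that the slopes of $\Hp(\wedge^e T)$ are $e$-fold sums of slopes of $\Hp(T)$. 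The non-split case you only sketch, but the scalar-extension argument (each $m_i$ repeated $[L:\QQ_\ell]$ times, $\lambda_1=\lambda_2$ by Galois conjugacy, then divide out $[L:\QQ_\ell]$) does go through. Conditions~(1)--(3) you treat exactly as the paper does.

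The sufficiency direction, however, has a genuine gap that you yourself flag. Your direct-sum construction $T_\alpha\oplus T_P$ covers only the case $m_2=m_{r+1}=\lambda_q$, and you defer the remaining cases ($m_2<\lambda_q$ or $m_{r+1}>\lambda_q$) to an unspecified ``careful'' matrix-factorization induction. The paper's trick is simpler and you miss it: since conditions~(2) and~(3) already force $m_3=\dots=m_r=\lambda_q$, one only needs to realize the Hodge polygon $(m_1,m_2,m_{r+1},m_{r+2})$ on a rank-$4$ Tate module $T\subset (T_1\oplus T_2^2)\otimes\QQ_\ell$ where $T_1$ has characteristic polynomial $P$ and $T_2=\ZZ_\ell$ with Frobenius $\mp\sqrt q$ --- this is precisely the abelian-surface case already settled in~\cite{Ry13} --- and then take $T_\ell=T\oplus T_2^{r-2}$, which tacks on the $r-2$ middle slopes $\lambda_q$. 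This reduction to the rank-$4$ case is the content of the sufficiency half, and without it (or an explicit replacement) your proof does not close.

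Two smaller omissions: you do not treat the case $f\neq f_\ell$ (in particular $\ell=p$, where $\deg P_\ell\leq 1$), which the paper dispatches by appealing to the construction from Theorem~\ref{conj}; and your verification that the rank-$2$ summand $T_P$ exists uses $m_1\leq\lambda_1$ but you should also note $m_1+m_{r+2}=\nu_\ell(P(1))=\lambda_1+\lambda_2$ (which does follow from the sum of slopes once $m_2=\dots=m_{r+1}=\lambda_q$), so that $\Np_\ell(P(1-t))$ actually dominates $\Hp((m_1,m_{r+2}),2)$ as required by~\cite[Theorem~3.2]{Ry1}.
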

\proof
Let $G=A(k)$ for some $A$ with Weil polynomial $f$. Assertion $(1)$ follows from~\cite[Theorem 1.1]{Ry1}, and
inequalities $(2)$ and $(3)$ follow from Theorem~\ref{main_cor} applied to $$T=T_\ell^{(1)}(A),\text{ }s=r,~\alpha=1\pm\sqrt{q},\text{ and } \lambda=\lambda_q.$$ In particular, $m_3=\dots=m_r=\lambda_q$.
To prove $(4)$ apply Corollary~\ref{main_cor2} to  $$s=r,\text{ }\alpha=(1\mp\sqrt{q})^{r-1}(1-\alpha_1)\text{ and } \lambda=(r-1)\lambda_q+\lambda_1,$$ where $\alpha_1$ is a root of $P$ such that $\nu_\ell(1-\alpha_1)=\lambda_1$. We get that $$m_1+m_{r+1}+(r-2)\lambda_q=m_1+m_3+\dots+m_{r+1}\leq (r-1)\lambda_q+\lceil\lambda_1\rceil.$$ Let us prove that if $\lambda_1$ is not integral,
then $$m_1+m_{r+1}<\lambda_q+\lceil\lambda_1\rceil.$$ In this case $\lambda_1=\lambda_2=\nu_\ell(P(1))/2$. If $m_1+m_{r+1}=\lambda_q+\lceil\lambda_1\rceil$, then $m_2+m_{r+2}=\lambda_q+\lfloor\lambda_2\rfloor$. But from $m_1+m_{r+1}\leq m_2+m_{r+2}$ it follows that $\lceil\lambda_1\rceil\leq\lfloor\lambda_2\rfloor$.

We prove the theorem in other direction. 
We construct Tate modules $T_\ell$ such that $f(t)=\det(t-x|T_\ell)$, and 
$G_\ell\cong T_\ell/(1-x)T_\ell$ and use Proposition~\ref{prop_on_Tate_module}. 
If $f\neq f_\ell$ we can apply the construction of the Tate module from the proof of Theorem~\ref{conj}.
 If $\ell=p$, then $\deg P_\ell\leq 1$, and $f\neq f_\ell$. So assume that $f=f_\ell$, and $\ell\neq p$.
 Let $T_1$ be a Tate module such that $x$ acts with characteristic polynomial $P$.
Put $T_2=\ZZ_\ell$ such that $x$ acts as multiplication by $\mp\sqrt{q}$, and put $$V=(T_1\oplus T_2^2)\otimes\QQ_\ell.$$ 
In~\cite{Ry13} we construct a Tate module $T\subset V$ such that $\Hp(T)=(m_1,m_2,m_{r+1},m_{r+2})$. Put $T_\ell=T\oplus T_2^{r-2}$.
 \qed

\section{Abelian surfaces and threefolds}
By Theorems~\cite[Theorem 1.1]{Ry1},~\ref{conj} and~\ref{conj2} we get a more conceptual proof of~\cite[Theorem 2.4]{Ry13}.

\begin{thm}\label{2dim}
Let $A$ be an abelian surface over a finite field with Weil polynomial $f_A$. 
Let $G$ be an abelian group of order $f_A(1)$.
Then $G$ is the group of points on some variety in the isogeny class
of $A$ if and only if for any prime number $\ell$ one of the following conditions holds.
\begin{enumerate}
\item Suppose $f_A$ has no multiple roots, then
$\Np_\ell(f_A(1-t))$ lies on or above $\Hp(G_\ell, 4)$.
\item Suppose $f_A=P_A^2$, and $P_A$ has no
multiple roots, then $G_\ell\cong G_\ell^{(1)}\oplus G_\ell^{(2)}$, where $G_\ell^{(1)}$ and
$G_\ell^{(2)}$ are $\ell$-primary abelian groups with one or two generators
such that $\Np_\ell(P_A(1-t))$ lies on or above $\Hp(G_\ell^{(1)},2)$
and $\Hp(G_\ell^{(2)},2)$.
\item Suppose $f_A=P(t)(t\pm\sqrt{q})^2$, and $\sqrt{q}\in\ZZ$, where $P(t)$ is separable, and $P(\mp\sqrt{q})\neq 0$. Let
\begin{itemize}
\item $\Hp(G_\ell,4)=(m_1,m_2,m_3,m_4)$;
\item $\lambda_1\leq\lambda_2$ be slopes of $\Np_\ell(P(1-t))$;
\item $\lambda_q=\nu_\ell(1\pm\sqrt{q})$;
\end{itemize}
Then
\begin{enumerate}
\item $\Np_\ell(f(1-t))$ lies on or above $\Hp(G_\ell,4)$;
\item $m_2\leq \lambda_q$;
\item $m_3\geq \lambda_q$;
\item $m_1+m_3\leq \lambda_q+\lambda_1$.
\end{enumerate}
\item Finally, if $f_A(t)=(t\pm\sqrt{q})^4$, and $\sqrt{q}\in\ZZ$, then
$G\cong(\ZZ/(1\pm\sqrt{q})\ZZ)^4$.
\end{enumerate}
\end{thm}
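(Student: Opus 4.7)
The strategy is to observe that the four cases listed in the statement exhaust every possible factorization type of a degree-$4$ Weil polynomial and then to invoke the general results of the paper (and of~\cite{Ry1}) in each. I would first verify the case distinction: since Frobenius eigenvalues are Weil $q$-numbers, any real double root of $f_A$ must equal $\pm\sqrt{q}$, and this forces $f_A$ to take one of the four advertised forms (separable; $P_A^2$ with $P_A$ separable of degree $2$; $P(t)(t\pm\sqrt{q})^2$; or $(t\pm\sqrt{q})^4$).

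For case (1), where $f_A$ is separable, the statement coincides with \cite[Theorem 1.1]{Ry1}. For case (2), where $f_A = P_A^2$ with $P_A$ separable of degree $2$, I would apply Theorem~\ref{conj} with $P = P_A$, $r = 2$, $s = 0$: the resulting direct sum decomposition $G_\ell \cong G_\ell^{(1)} \oplus G_\ell^{(2)}$, with each Hodge polygon dominated by $\Np_\ell(P_A(1-t))$, is precisely condition (2). For case (3), where $f_A = P(t)(t\pm\sqrt{q})^2$, I would specialize Theorem~\ref{conj2} to $r = 2$; then $r+2 = 4$, $m_r = m_2$ and $m_{r+1} = m_3$, and the four conditions (1)--(4) of that theorem translate verbatim into (a)--(d) here.

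Only case (4) needs a short independent argument. When $f_A = (t\pm\sqrt{q})^4$ with $\sqrt{q}\in\ZZ$, the semisimplicity of Frobenius on $V_\ell(A)$, together with the fact that it has the single eigenvalue $\mp\sqrt{q}$, forces $F$ to act as the scalar $\mp\sqrt{q}$ on $V_\ell(A)$, and hence also on $T_\ell(A)$. For $\ell\neq p$, $T_\ell(A)$ is free of rank $4$ over $\ZZ_\ell$, so
$$A(k)_\ell \cong T_\ell(A)/(1\pm\sqrt{q})T_\ell(A) \cong (\ZZ_\ell/(1\pm\sqrt{q})\ZZ_\ell)^4.$$
Since $\sqrt{q} = p^m$ for some $m \geq 1$, the integer $1 \pm \sqrt{q}$ is coprime to $p$, so the $p$-primary component of $G$ is trivial and $G \cong (\ZZ/(1\pm\sqrt{q})\ZZ)^4$ is determined globally. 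Existence is witnessed by $A = E \times E$ for a supersingular elliptic curve $E$ over $k$ with $f_E = (t\pm\sqrt{q})^2$.

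The only mildly subtle steps are the index bookkeeping in case (3) and the sign bookkeeping in case (4); no genuinely hard new step remains, which is consistent with the author's description of this as a ``more conceptual'' proof of \cite[Theorem 2.4]{Ry13}.
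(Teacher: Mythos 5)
Your proof is correct and takes essentially the same route as the paper, which simply records that the theorem follows from \cite[Theorem 1.1]{Ry1}, Theorem~\ref{conj}, and Theorem~\ref{conj2}. You usefully spell out the parameter instantiations ($r=2$, $s=0$ in Theorem~\ref{conj} for case (2); $r=2$ in Theorem~\ref{conj2} for case (3)), and your index translation in case (3) is correct: with $r=2$ one has $m_r=m_2$, $m_{r+1}=m_3$, so conditions (1)--(4) of Theorem~\ref{conj2} become exactly (a)--(d). The separate argument for case (4) is fine (though it also drops out of Theorem~\ref{conj} with $P=t\pm\sqrt{q}$, $r=4$, $s=0$), and the observation that $1\pm\sqrt{q}$ is prime to $p$ correctly handles $\ell=p$. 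One point worth making explicit in your case exhaustiveness check: the degenerate possibility $f_A=(t^2-q)(t\pm\sqrt{q})^2=(t\mp\sqrt{q})(t\pm\sqrt{q})^3$, excluded from case (3) by the hypothesis $P(\mp\sqrt{q})\neq 0$, never occurs as the Weil polynomial of an abelian surface, because $t^2-q$ has constant term $-q$ and so is not the Weil polynomial of an elliptic curve, while the factorization $(t\mp\sqrt{q})(t\pm\sqrt{q})^3$ admits no decomposition into two quadratics each with constant term $q$. Once this is noted, your claim that the four listed forms exhaust all abelian-surface Weil polynomials is justified.
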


Finally, we give a partial classification of groups of points on abelian threefolds. Note, that in the theorem below we do not know the classification in case $(2)$ for $r>3$,
 and in cases $(3)$ and $(4)$ for $r>2$.

\begin{thm}\label{3dim}
Let $A$ be an abelian threefold over a finite field with Weil polynomial $f_A$. Then the following conditions hold for $G=A(k)$:
\begin{enumerate}
\item Suppose $f_A$ is separable. Then $\Np_\ell(f_A(1-t))$ lies on or above $\Hp(G_\ell,6)$.
\item Suppose $f_A=P^2$, and $P$ is separable. Let $m=\nu_\ell(P(1))$, and let $G_\ell$ be generated by $r$ elements. Then $r\geq 2$. If  $r=2$, then $G_\ell\cong(\ZZ/m\ZZ)^2$, and if $r=3$, then $\Hp(G_\ell,3)=(m_1,m_2,m_3)$, where $\Np_\ell(P(1-t))$ lies on or above the Hodge polygon $(m-m_3,m-m_2,m-m_1)$.
\item Suppose $f_A=P^2Q$, where $\deg P=\deg Q=2$, and the polynomial $PQ$ is separable. Let $m=\nu_\ell(P(1)Q(1))$. Then $G_\ell$ is generated by $r\geq 2$ elements, and if $r=2$, then $\Hp(G_\ell,2)=(m_1,m_2)$, where $\Np_\ell(Q(1-t))$ lies on or above the Hodge polygon $(m-m_2,m-m_1)$.
\item 
Suppose $f_A(t)=P(t)(t\pm\sqrt{q})^2$, and $\sqrt{q}\in\ZZ$, where $P(t)$ is separable, and $P(\mp\sqrt{q})\neq 0$.
Let $m=\nu_\ell(P(1)(1\pm\sqrt{q}))$.
Then $G_\ell$ is generated by $r\geq 2$ elements, and if $r=2$, then $\Hp(G_\ell,2)=(m_1,m_2)$, where $\Np_\ell(P(1-t))$ lies on or above the Hodge polygon $(0,0,m-m_2,m-m_1)$.
\item 
Suppose $f_A(t)=P(t)(t\pm\sqrt{q})^4$, and $\sqrt{q}\in\ZZ$, where $P(t)$ is separable, and $P(\mp\sqrt{q})\neq 0$.
Let
\begin{itemize}
\item $\Hp(G_\ell,6)=(m_1,\dots,m_6)$;
\item $\lambda_1\leq\lambda_2$ be slopes of $\Np_\ell(P(1-t))$;
\item $\lambda_q=\nu_\ell(1\pm\sqrt{q})$.
\end{itemize}
Then 
\begin{enumerate}
\item $\Np_\ell(f(1-t))$ lies on or above $\Hp(G_\ell,6)$;
\item $m_2\leq \lambda_q$;
\item $m_3=m_4=\lambda_q$;
\item $m_5\geq \lambda_q$;
\item $m_1+m_5\leq \lambda_q+\lambda_1$.
\end{enumerate}
\item Finally, suppose $f_A=P^rQ^s$ for some $r$ and $s$, where $\deg P\leq 2$, the polynomial $P$ is separable, and $Q$ divides $P$. Then $G_\ell\cong \oplus_{j=1}^r G_\ell^{(j)}\oplus H$, where $G_\ell^{(j)}$ are $\ell$-primary abelian groups such that $\Np_\ell(P(1-t))$ lies on or above $\Hp(G_\ell^{(j)},2)$ for all $1\leq j\leq r$, and $H\cong(\ZZ_\ell/Q(1)\ZZ_\ell)^s$.
\end{enumerate}
Conversely, let $G$ be an abelian group of order $f_A(1)$ such that one of the conditions above holds.
Then $G$ is the group of points on some variety in the isogeny class of $A$.
\end{thm}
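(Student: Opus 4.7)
The plan is to handle the six cases of Theorem~\ref{3dim} by reducing each to one of the structural results already in hand. Cases~(1), (5), and~(6) are essentially quotations: case~(1) is~\cite[Theorem~1.1]{Ry1}; case~(6) is Theorem~\ref{conj} verbatim (the hypotheses $\deg P\leq 2$ and $Q\mid P$ match); and case~(5) is Theorem~\ref{conj2} with $r=4$, the refined equality $m_3=m_4=\lambda_q$ in item~(5c) being read off from the intermediate observation $m_3=\dots=m_r=\lambda_q$ recorded inside that proof.

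For cases~(2), (3), and~(4), the necessity argument follows a uniform recipe. Set $T=T_\ell^{(1)}(A)$. Because $f_A$ contains a repeated factor ($P^2$ or $(t\pm\sqrt{q})^2$), the minimal number of $R$-generators of $T$ is at least~$2$, yielding the bound $r\geq 2$ on the generator count of $G_\ell$. When this bound is attained ($r=2$), torsion-freeness over $R$ and the rank count force $T$ to be free of rank~$2$ over the relevant $\ZZ_\ell$-order, whence $T/(1-x)T\cong(R/(1-x)R)^2$, giving the explicit $(\ZZ/\ell^m\ZZ)^2$ or $\Hp(G_\ell,2)=(m_1,m_2)$ form. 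For $r=3$ in case~(2), I would pick a saturated rank-one $R$-submodule $T_1\subset T$ with torsion-free quotient and apply Theorem~\ref{main}; the dual Newton-above-Hodge constraint on $(m-m_3,m-m_2,m-m_1)$ then follows from the matrix-factorization duality $(X,Y)\leftrightarrow(Y,X)$ of Proposition~\ref{prop_mf}, exactly as used in the proof of Theorem~\ref{conj}. The slope bounds $m_2\leq\lambda_q$, $m_3=m_4=\lambda_q$, and $m_5\geq\lambda_q$ in cases~(4)--(5), as well as the sum inequality~(5e), are obtained by applying Theorem~\ref{main_cor} to the root $\alpha=1\pm\sqrt{q}$ of $f_A(1-t)$ and Corollary~\ref{main_cor2} to $\alpha=(1\mp\sqrt{q})^{r-1}(1-\alpha_1)$, in direct parallel with the proof of Theorem~\ref{conj2}.

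For the converse direction, I invoke Proposition~\ref{prop_on_Tate_module} and, for each prime $\ell\mid f_A(1)$, construct a Tate module $T_\ell$ with $f_\ell(t)=\det(t-x\mid T_\ell)$ and $T_\ell/(1-x)T_\ell\cong G_\ell$. The contribution coming from the separable part of $f_A$ is produced by~\cite[Theorem~3.2]{Ry1}; the contribution from a repeated factor $P^r$ or $(t\pm\sqrt{q})^r$ is produced by Proposition~\ref{prop_mf}, prescribing a matrix factorization $(X,Y)$ whose determinant is the prescribed factor and whose reduction modulo $1-t$ is $\diag(\ell^{m_1},\dots,\ell^{m_j})$. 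The pieces are then assembled by direct sum, producing Tate modules of the required type at every prime, and Proposition~\ref{prop_on_Tate_module} supplies the variety.

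The main obstacle is case~(2) with $r=3$. Here $R=\ZZ_\ell[t]/P\ZZ_\ell[t]$ has $\ZZ_\ell$-rank~$3$ and need not be a discrete valuation ring (it may split over $\QQ_\ell$ into several local factors), so the free-module shortcut that trivializes the $r=2$ case is not available and the $(X,Y)\leftrightarrow(Y,X)$ duality must be carried through at higher rank. On the sufficiency side, one has to exhibit an explicit matrix factorization $(X,Y)$ with $\det X=P^2$ and $X\equiv\diag(\ell^{m_1},\ell^{m_2},\ell^{m_3})\bmod (1-t)$ realizing every admissible triple satisfying the dual Newton-above-Hodge condition; this is the genuinely new technical step beyond Theorems~\ref{conj} and~\ref{conj2}, and is the reason the classification is not extended to $r\geq 4$ in the present paper.
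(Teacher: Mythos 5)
Your overall scaffolding is right for cases (1), (5), (6), and the generator-count and annihilator observations for the small-$r$ cases, but your treatment of cases (2)--(4) diverges from the paper in two ways, one of which is a genuine error.

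First, the "$r=2$ forces $T$ free of rank~2 over the relevant $\ZZ_\ell$-order" shortcut does not hold and is not what the paper does. In case~(3), $R=\ZZ_\ell[t]/PQ\ZZ_\ell[t]$ has $\ZZ_\ell$-rank $4$ while $T=T_\ell^{(1)}(A)$ has rank $6$, and in case~(4), $R=\ZZ_\ell[t]/P(t)(t\pm\sqrt q)\ZZ_\ell[t]$ has rank $5$ while $T$ has rank $6$; in neither case can $T$ be $R$-free of any rank, and indeed the theorem allows $\Hp(G_\ell,2)=(m_1,m_2)$ to vary subject only to a Newton-above-Hodge constraint, not be locked to a single pair. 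Even in case~(2), where the ranks permit it, freeness is not needed: the paper simply observes that $P(x)=0$ forces $G_\ell$ to be killed by $\ell^m=\nu_\ell(P(1))$, and then the order $\ell^{2m}$ together with two generators forces $(\ZZ/\ell^m\ZZ)^2$.

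Second, and more substantively, you flag the sufficiency direction in case~(2) with $r=3$ as a "genuinely new technical step" requiring explicit exhibition of a matrix factorization, and speculate that this is why the paper stops there. In fact the $(X,Y)\leftrightarrow(Y,X)$ duality of Proposition~\ref{prop_mf} already delivers an equivalence, not just a one-way implication: a Tate module $T$ with characteristic polynomial $P^2$, $3$ generators, and $\Hp(T)=(m_1,m_2,m_3)$ exists \emph{iff} a Tate module $T'$ with characteristic polynomial $\det Y=P^3/P^2=P$, $3$ generators, and $\Hp(T')=(m-m_3,m-m_2,m-m_1)$ exists, and since $P$ is separable the existence of $T'$ is governed precisely by the Newton-above-Hodge criterion of \cite[Theorem~1.1]{Ry1}. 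The same duality handles cases~(3) and~(4), where $\det Y$ becomes $Q$ or $P$ respectively, both separable. Your proposed detour through a saturated rank-one $R$-submodule and Theorem~\ref{main} is not used and would not directly yield the biconditional. The actual reason the paper stops at the stated $r$ is that for larger $r$ the dual module has characteristic polynomial $P^{r-2}$ (or similar), which is no longer separable, so the duality does not bottom out in a solved case.
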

\begin{proof}
The cases $(1)$, $(5)$ and $(6)$ follow from Theorems~\cite[Theorem 1.1]{Ry1},~\ref{conj} and~\ref{conj2}. We prove $(2)$ first. Without loss of generality assume that $P=P_\ell$. 
Recall that $P(x)=0$, and $\nu_\ell(P(1))=m$. It follows that the group $$G_\ell\cong T_\ell(A)/(1-x)T_\ell(A)$$ is annihilated by $\ell^m$. Thus $r\geq 2$, and if $r=2$, then $G_\ell\cong(\ZZ/m\ZZ)^2$. Assume $r=3$. 
By Proposition~\ref{prop_on_Tate_module}, we have to find out when there exists a module $T$ over $R=\ZZ_\ell[t]/P(t)\ZZ_\ell[t]$ with $r=3$ generators such that $\Hp(T/(1-x)T,3)=(m_1,m_2,m_3)$.
By Proposition~\ref{prop_mf}, there exists a matrix factorization $(X,Y)$ such that $\det X=f_A$ and
$$X\equiv \diag(\ell^{m_1},\ell^{m_2},\ell^{m_3})\mod (1-t)\ZZ_\ell[t].$$
The matrix factorization $(Y,X)$ corresponds to a module $T'$ over
$R$, which is generated by $3$ elements, and the characteristic
polynomial of $x$ is equal to $$\det Y=P^3/f_A=P,$$ and
$$Y\equiv \diag(\ell^{m-m_1},\ell^{m-m_2},\ell^{m-m_3})\mod (1-t)\ZZ_\ell[t].$$
Such a module exists if and only if $\Np_\ell(P(1-t))$ lies on or above $\Hp(T'/(1-x)T',3)=(m-m_3,m-m_2,m-m_1)$.
We proved that there exists a module $T$ such that $\Hp(T/(1-x)T,3)=(m_1,m_2,m_3)$ if and only if $\Np_\ell(P(1-t))$ lies on or above $(m-m_3,m-m_2,m-m_1)$. 

In cases $(3)$ and $(4)$ the group $G_\ell$ is annihilated by $\ell^m$. Thus it is not cyclic, and $r\geq 2$. 
We now consider the case $(3)$. Let $R=\ZZ_\ell[t]/P(t)Q(t)\ZZ_\ell[t]$. Assume we have an $R$-module $T$ with $r=2$ generators such that $\Hp(T/(1-x)T,2)=(m_1,m_2)$.
As before, there exists a matrix factorization $(X,Y)$ such that $\det X=f_A$ and
$$X\equiv \diag(\ell^{m_1},\ell^{m_2})\mod (1-t)\ZZ_\ell[t].$$
The matrix factorization $(Y,X)$ corresponds to a module $T'$ over
$R$, which is generated by $2$ elements, and the characteristic
polynomial of $x$ is equal to $$\det Y=(PQ)^2/f_A=Q,$$ and
$$Y\equiv \diag(\ell^{m-m_1},\ell^{m-m_2})\mod (1-t)\ZZ_\ell[t].$$
As above, it follows that there exists a module $T$ such that $\Hp(T/(1-x)T,2)=(m_1,m_2)$ if and only if $\Np_\ell(Q(1-t))$ lies on or above $\Hp(T'/(1-x)T',3)=(m-m_2,m-m_1)$. 

Case $(4)$. Let $R=\ZZ_\ell[t]/P(t)(t\pm\sqrt{q})\ZZ_\ell[t]$, and let $T$ be an $R$-module with $r=2$ generators such that $\Hp(T/(1-x)T,2)=(m_1,m_2)$.
For the corresponding matrix factorization $(X,Y)$ the matrix factorization $(Y,X)$ gives a module $T'$ over
$R$, which is generated by $2$ elements, and the characteristic
polynomial of $x$ is equal to $$\det Y=(P(t\pm\sqrt{q}))^2/f_A=P,$$ and
$$Y\equiv \diag(\ell^{m-m_1},\ell^{m-m_2})\mod (1-t)\ZZ_\ell[t].$$
The rank of $T'$ is $4$, and such a module exists if and only if $\Np_\ell(P(1-t))$ lies on or above $\Hp(T'/(1-x)T',4)=(0,0,m-m_2,m-m_1)$.
Therefore there exists a module $T$ such that $\Hp(T/(1-x)T,2)=(m_1,m_2)$ if and only if $\Np_\ell(P(1-t))$ lies on or above $\Hp(T'/(1-x)T',3)=(m-m_2,m-m_1)$.  
\end{proof}


\begin{thebibliography}{99}


\bibitem[La94]{Lang} S. Lang. {\it Algebraic number theory.} Graduate Texts in Mathematics, Springer, 1994.

\bibitem[Ry10]{Ry1} S. Rybakov.
{\it The groups of points on abelian varieties over finite
fields.} Cent. Eur. J. Math. 8(2), 2010, 282-288.
arXiv:0903.0106

\bibitem[Ry12]{Ry13} S. Rybakov. {\it The groups of points on abelian surfaces over finite fields. }
In Arithmetic, Geometry, Cryptography and Coding Theory, Cont. Math., vol. 574, Amer. Math. Soc., Providence, RI, 2012, pp. 151-158. 
arXiv:1007.0115

\bibitem[Ry14]{Ry} S. Rybakov. {\it The finite group subschemes of abelian varieties over finite fields.} 
Finite Fields and Their Applications. 29 (2014), 132-150. arXiv:1006.5959

\bibitem[Ta66]{Ta66} J. Tate. {\it Endomorphisms of abelian varieties over finite fields.} Inventiones mathematicae
1966, Volume 2, Issue 2, pp 134--144.

\bibitem[Wa69]{Wa} Waterhouse W.,
Abelian varieties over finite fields, Ann.\ scient.\ \'Ec.\ Norm.\
Sup., 1969, 4 serie 2, 521--560.

\bibitem[WM69]{WM} Waterhouse W., Milne J.,
Abelian varieties over finite fields, Proc. Sympos. Pure Math.,
Vol. XX, State Univ. New York, Stony Brook, N.Y., 1969, 53--64.

\end{thebibliography}
\end{document}